\newcommand{\B}{\mathbb{B}}
\newcommand{\G}{\mathbb{G}}
\newcommand{\M}{\mathbb{M}}
\newcommand{\N}{\mathbb{N}}
\newcommand{\R}{\mathbb{R}}
\newcommand{\cC}{\mathcal{C}}
\newcommand{\cF}{\mathcal{F}}
\newcommand{\cG}{\mathcal{G}}
\newcommand{\cH}{\mathcal{H}}
\newcommand{\cL}{\mathcal{L}}
\newcommand{\cO}{\mathcal{O}}
\newcommand{\cP}{\mathcal{P}}
\newcommand{\cS}{\mathcal{S}}
\newcommand{\cR}{\mathcal{R}}
\newcommand{\cV}{\mathcal{V}}
\newcommand{\cW}{\mathcal{W}}
\newcommand{\ep}{\varepsilon}
\newcommand{\ph}{\varphi}
\newcommand{\sm}{\setminus}
\newcommand{\lb}{{\big\lbrace}}
\newcommand{\rb}{{\big\rbrace}}
\newcommand{\ls}{\mbox{\large $($}}
\newcommand{\rs}{\mbox{\large $)$}}
\newcommand{\Ls}{\mbox{\Large $($}}
\newcommand{\Rs}{\mbox{\Large $)$}}
\newcommand{\diams}{\mbox{\rm diam}\;\!}
\newcommand{\diam}{\mbox{\rm diam}}
\renewcommand{\div}{\mbox{\rm div\,}}
\newcommand{\Tan}{\mbox{\rm Tan}}
\newcommand{\Id}{\mbox{\rm Id}}
\newcommand{\bcup}{\bigcup}
\newcommand{\res}{\mbox{\LARGE{$\llcorner$}}}
\newcommand{\lan}{\langle}
\newcommand{\ran}{\rangle}
\newcommand{\lra}{\longrightarrow}
\newcommand{\der}{\partial}
\newcommand{\avint}{\hbox{\vrule height3.5pt depth-2.8pt
width4pt}\mkern-12mu\int\nolimits}
\newcommand{\Rd}{{\mathcal F}_HE}
\newcommand{\Per}{|\partial_HE|}
\newtheorem{The}{Theorem}[section]
\newtheorem{Def}{Definition}[section]
\newtheorem{Rem}{Remark}[section]
\newtheorem{Exa}[The]{Example}
\begin{document}

\title
[A new differentiation, shape of the unit ball and perimeter measure]
{{ A new differentiation, shape of the unit ball and \\ perimeter measure }}
\author{Valentino Magnani}
\address{Valentino Magnani, Dipartimento di Matematica, Universit\`a di Pisa \\
Largo Bruno Pontecorvo 5 \\ I-56127, Pisa}
\email{magnani@dm.unipi.it}
\date{\today}
\thanks{The author acknowledges the support of the European Project ERC AdG *GeMeThNES*, Grant Agreement ${\mathrm N}^{\circ}$ 246923.}
\subjclass[2010]{Primary 28A75. Secondary 53C17, 22E30.}
\keywords{Perimeter measure, Carnot group, spherical Hausdorff measure} 
\date{\today}

\begin{abstract}
We present a new blow-up method that allows for establishing the first general formula to compute the perimeter measure with respect to the spherical Hausdorff measure in noncommutative nilpotent groups. This result leads us to an unexpected relationship between the area formula with respect to a distance and the profile of its corresponding unit ball.
\end{abstract}

\maketitle

\tableofcontents


%
%
%
%
%
%
%
\section{Introduction}

In the last decade, the study of sub-Riemannian Geometry, in short SR Geometry, has known a strong impulse in different areas, from PDE and Control Theory to Differential Geometry and Geometric Measure Theory. In particular, a number of Riemannian problems has a sub-Riemannian interpretation in a large framework and this may lead to either foundational questions or to new viewpoints.

The challenging project of developing Geometric Measure Theory on SR manifolds has shown that both of these aspects can happen. With this aim in mind, finding a theory of area in SR Geometry represents the starting point of a demanding program.

Historically, since the seminal works by Carath\'eodory \cite{Carath1914} and Hausdorff \cite{Hausd1918}, many theories grew to study $k$-dimensional Lebesgue area, smoothness conditions for area and coarea formulae, extensions to Finsler spaces, etc. We mention only a few relevant references \cite{AmbKir1}, \cite{AmbKir00},  \cite{AFP2000}, \cite{BPV96}, \cite{Busem47}, \cite{Cesari1956}, \cite{DeGiorgi}, \cite{DePHardt2012}, \cite{Federer44}, \cite{Federer55}, \cite{Federer59}, \cite{Federer69}, \cite{Federer78}, \cite{GoffZie70}, \cite{Haj93}, \cite{Kir94}, \cite{MSZ03}, \cite{Mattila1995}, \cite{Preiss1987}, \cite{Rado1948}, \cite{White99}, to give a very small glimpse of the much wider literature in both old and new research lines.

In the modern view, the Riemannian surface area can be computed by Euclidean tools, since smooth subsets have Lipschitz parametrizations, the Rademacher theorem holds and change of variables formulae perfectly fit with the standard density given by the Riemannian metric. 

The previous techniques fail completely in the SR case and this is due to two basic difficulties. First of all, we may not have Lipschitz parametrizations, even for smooth subsets of a sub-Riemannian manifold. This forces the use of abstract differentiation theorems for measures, although the second obstacle comes up exactly at this stage. In fact, the Besicovitch covering theorem, shortly B.C.T., fails to hold precisely for two important distances, such as the sub-Riemannian distance and the Cygan-Kor\'anyi distance in the Heisenberg group, \cite{KorRei95}, \cite{SawWhe92}. Although there exist some special distances such that the B.C.T. holds, \cite{LeDoRigot14}, a complete development of Geometric Measure Theory in the SR framework has to include all homogeneous distances, or at least the most important ones. To keep this generality, we do not have any general theorem to differentiate an arbitrary Radon measure.

We will show how to overcome these difficulties for hypersurfaces and for finite perimeter sets in special classes of nilpotent Lie groups.
Our ambient space is the {\em homogeneous} stratified group, corresponding to a stratified Lie group equipped with a fixed homogeneous distance, see Section~\ref{Sect:Notions}. Here the natural problem is to compute the perimeter measure by the Hausdorff measure constructed with the homogeneous distance of the group. 

Homogeneous stratified groups are Ahlfors regular and satisfy a Poincar\'e inequality, hence their metric perimeter measure coincides with the variational perimeter \cite{Mir}, and the general results of \cite{Amb01} give the following formula
\begin{equation}\label{intro:perimBeta}
\Per=\beta\; \cS_0^{Q-1}\res\Rd\,,
\end{equation}
where $\beta$ is measurable, $Q$ is the Hausdorff dimension of the group, $E\subset\G$ is an h-finite perimeter set, $\Rd$ is the reduced boundary and $\Per$ is the variational perimeter measure on groups, see Section~\ref{Sect:AreaPerim} for more details. 
The $(Q-1)$-dimensional spherical Hausdorff measure with no geometric constant $\cS_0^{Q-1}$ is introduced in Definition~\ref{def:sphericalH}.

A variational notion of perimeter measure on SR manifolds has been introduced in \cite{AmbMag28Ghe}, where the divergence operator is only defined by the volume measure. This implies that the notion of perimeter measure in stratified groups can be introduced by the standard divergence, see \eqref{eq:PerE}.

Finding a geometric expression for $\beta$ is obviously the crucial question. When the group is the Euclidean space, the classical De Giorgi's theory \cite{DeGiorgi} proves that $\beta\equiv\omega_{n-1}$, where $\omega_{n-1}$ is the volume of the unit ball in $\R^{n-1}$. 
Here we remark the crucial role of the classical area formula, joined with the rectifiability of the reduced boundary.
Extensions to the case of Finsler spaces have been also established, \cite{BPV96}.

When we consider a noncommutative stratified group, there is a drastic change of the problem, where the classical area formula does not apply.
In fact, according to the examples of \cite{KirSer}, the reduced boundary $\Rd$ in general may not be rectifiable in the sense of 3.2.14 of \cite{Federer69}, so all of the known methods fail. 
At present there are no results to find $\beta$ when the spherical Hausdorff measure is replaced by the Hausdorff measure. On the other hand, some integral representations for Borel measures with respect to the spherical Hausdorff measure can be written.
\begin{The}[\cite{Mag30}]\label{the:metricspherical}
Let $X$ be a diametrically regular metric space, let $\alpha>0$ and let $\mu$ be a Borel regular measure over $X$ 
such that there exists a countable open covering of $X$ whose elements have $\mu$ finite measure. 
If $B\subset A\subset X$ are Borel sets and $\cS_{\mu,\zeta_{b,\alpha}}$ covers $A$ finely,
then $\theta^\alpha(\mu,\cdot)$ is Borel on $A$. In addition, if $\cS^\alpha(A)<+\infty$
and $\mu\res A$ is absolutely continuous with respect to $\cS^\alpha\res A$, then we have
\begin{equation}\label{eq:spharea}
 \mu(B)=\int_B \theta^\alpha(\mu,x)\,d\cS^\alpha(x)\,.
\end{equation}
\end{The}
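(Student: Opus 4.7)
The plan is to mimic the classical differentiation-of-measures proof, replacing the Besicovitch covering theorem, which is unavailable in general homogeneous groups, by the fine covering property built into the hypothesis that $\cS_{\mu,\zeta_{b,\alpha}}$ covers $A$ finely. The argument splits naturally into three stages.

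First, I would establish that $x\mapsto\theta^\alpha(\mu,x)$ is Borel on $A$. This is where the diametric regularity of $X$ and the countable open covering by sets of finite $\mu$-measure enter: diametric regularity ensures that for each fixed radius $r$ the function $x\mapsto\mu(B(x,r))$ enjoys the right Borel-type measurability, and the $\sigma$-finiteness hypothesis allows one to localize and take a limsup of difference quotients along a countable sequence of rational radii tending to zero.

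Second, I would prove the two sandwich inequalities that drive the density theorem. For $0<t<+\infty$ set
\begin{equation*}
A_t^+=\{x\in B:\theta^\alpha(\mu,x)>t\},\qquad A_t^-=\{x\in B:\theta^\alpha(\mu,x)<t\}.
\end{equation*}
The fine cover hypothesis gives, at every point of $A_t^\pm$, arbitrarily small closed balls whose $\mu$-measure is comparable to $t$ times the Hausdorff-type gauge. Extracting from this fine family a countable disjoint subfamily whose union exhausts $A_t^\pm$ up to an $\cS^\alpha$-null set, and comparing with the definition of $\cS^\alpha$ as an infimum over such ball coverings, should yield
\begin{equation*}
t\,\cS^\alpha(A_t^+)\le\mu(A_t^+)\qquad\text{and}\qquad\mu(A_t^-)\le t\,\cS^\alpha(A_t^-).
\end{equation*}
The finiteness $\cS^\alpha(A)<+\infty$ controls error terms, and the absolute continuity $\mu\res A\ll\cS^\alpha\res A$ ensures that $\theta^\alpha(\mu,\cdot)$ is $\cS^\alpha$-a.e.\ finite, so that the exceptional sets $\{\theta^\alpha=+\infty\}$ in the above inequalities are harmless.

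Third, a layer-cake computation applied on the annular slices $\{x\in B:s<\theta^\alpha(\mu,x)\le t\}$ with $s,t$ ranging over a geometric partition $t_k=(1+\eps)^k$ and $\eps\downarrow0$ converts the two inequalities into the desired identity $\mu(B)=\int_B\theta^\alpha(\mu,x)\,d\cS^\alpha(x)$ for every Borel $B\subset A$. The main obstacle is the covering selection in the second stage. Without Besicovitch one cannot directly extract a disjoint subfamily from an arbitrary family of small balls centred in a target set; here one must exploit the fine cover property of $\cS_{\mu,\zeta_{b,\alpha}}$, together with the absolute continuity assumption, to carry out a Vitali-type selection producing disjoint balls whose union fills $A_t^\pm$ up to $\cS^\alpha$-null error while keeping the $\mu$-versus-radius comparison under control. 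This is the technical heart of the proof.
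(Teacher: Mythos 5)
The paper itself does not prove this statement: it is imported verbatim from \cite{Mag30}, so there is no in-paper proof against which to compare your argument. What I can do is assess the proposal against what the theorem actually requires, and against the Federer-style argument that \cite{Mag30} is built on (Federer 2.10.17--2.10.19).

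Your three-stage skeleton (Borel measurability, two sandwich inequalities for the superlevel and sublevel sets of $\theta^\alpha(\mu,\cdot)$, then a layer-cake integration) is indeed the right scaffolding for a result of this type. The problem is the mechanism you invoke at the technical heart. You propose to ``extract from this fine family a countable disjoint subfamily whose union exhausts $A_t^\pm$ up to an $\cS^\alpha$-null set'' and later to ``carry out a Vitali-type selection producing disjoint balls.'' That is precisely the step that is \emph{not} available here and is not what the hypothesis $\cS_{\mu,\zeta_{b,\alpha}}$ covering $A$ finely gives you. Homogeneous metrics on stratified groups may fail the Besicovitch covering theorem, and a fine cover by closed balls does not, in general, admit a disjoint (or boundedly overlapping) subfamily exhausting the set up to $\cS^\alpha$-null error. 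The Federer mechanism bypasses disjointness altogether. One direction of the sandwich, say $\mu(A_t^-)\le t\,\cS^\alpha(A_t^-)$, comes from countable subadditivity of $\mu$ alone: cover $A_t^-$ finely by small balls $B_j$ satisfying $\mu(B_j)\le t\,c_\alpha\,\diam(B_j)^\alpha$, so that
\begin{equation*}
\mu(A_t^-)\le\sum_j\mu(B_j)\le t\sum_j c_\alpha\,\diam(B_j)^\alpha,
\end{equation*}
and then pass to the infimum over such covers in the Carath\'eodory construction. No disjointification is needed here. The other direction, bounding $\cS^\alpha(A_t^+)$ above by $\mu(A_t^+)/t$, is where the specific construction $\cS_{\mu,\zeta_{b,\alpha}}$ and the fine covering hypothesis for that construction are decisive: $\cS_{\mu,\zeta_{b,\alpha}}$ is built so that the admissible covers are already coupled to the $\mu$-mass, which is what makes $\sum_j\mu(B_j)$ comparable to $\mu(A_t^+)$ without extracting a disjoint subfamily. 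Your ``Vitali-type'' framing collapses this coupling into a claim that would be false in the generality of the theorem.

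A smaller point: the spherical Federer density in \eqref{eq:FDens} is a supremum over balls \emph{containing} $x$, not balls centered at $x$. Your Stage 1 sketch (Borel measurability ``by taking a limsup of difference quotients along a countable sequence of rational radii'') is written as if the density were a centered one. The actual argument must handle the non-centered supremum, which is again part of what the Federer/Carath\'eodory apparatus takes care of. Overall, the outline has the right shape, but the covering step, the point your proposal itself identifies as the technical heart, is not the Vitali-type selection you describe.
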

%
%
The general hypotheses of Theorem~\ref{the:metricspherical} are obviously satisfied in all stratified groups, since B.C.T. is not required.
The {\em spherical Federer density} $\theta^\alpha(\mu,\cdot)$ has been recently introduced in \cite{Mag30}. The crucial aspect is the explicit formula
\begin{equation}\label{eq:FDens} 
\theta^\alpha(\mu,x)=\inf_{\ep>0}\; \sup\bigg\{\frac{\mu(\B)}{c_\alpha \diam(\B)^\alpha}: x\in\B\in\cF_b, \diams\B<\ep\bigg\}\,,
\end{equation}
where $\cF_b$ denotes the family of all closed balls in $X$ and $c_\alpha>0$ plays the role of the geometric constant to be fixed
in relation to the geometry of the metric ball.
Then finding $\beta$ in \eqref{intro:perimBeta} exactly corresponds to find a geometric expression for $\theta^{Q-1}(\Per,\cdot)$,
depending on the fixed homogeneous distance $d$. 

Let us point out that the spherical Federer density may differ from the $\alpha$-density in the sense of 2.10.19 of \cite{Federer69},
as shown for instance in \cite{Mag30}. The latter density has been recently related to a new measure theoretic area formula, proved by Franchi, Serapioni and Serra Cassano, \cite{FSSC8}.
In this formula, the spherical Hausdorff measure of \eqref{eq:spharea} is replaced by the so-called {\em centered Hausdorff measure}, 
introduced by Saint Raymond and Tricot in \cite{SRayTri88}, see also \cite{Edg2007}. 

To compute the Federer density for the perimeter measure, we consider suitable $\G$-regular sets, see Definition~\ref{def:GR}.
From \cite{FSSC5}, in two step stratified groups the reduced boundary $\Rd$ can be covered by a countable union of these $\G$-regular sets, up to $\cH^{Q-1}$-negligible sets, namely it is $\G$-rectifiable. We use this approach since it allows us to differentiate the perimeter measure in a way that can be 
suitable also for potential extensions to higher codimensional submanifolds.

The $\G$-rectifiability of $\Rd$ also holds in special classes of higher step groups, \cite{Marchi}, and for all $C^1$ smooth open sets of arbitrary stratified groups, \cite{Mag5}. For these reasons, in order to include all of these cases, we state our results for all h-finite perimeter sets whose reduced boundary $\Rd$ is $\G$-rectifiable.
\begin{The}[Area formula for the perimeter measure]\label{the:AreaPerim}
Let $\G$ be a stratified group and let $E\subset\G$ be an $h$-finite perimeter set. If $\Rd$ is $\G$-rectifiable, then we have  
\begin{equation}\label{eq:perimBeta}
\Per=\beta( d,\nu_E)\, \cS_0^{Q-1}\res\Rd\,.
\end{equation}
\end{The}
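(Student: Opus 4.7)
The plan is to apply Theorem~\ref{the:metricspherical} with $\mu=\Per$ and $\alpha=Q-1$. In a homogeneous stratified group the hypotheses of this theorem hold automatically: the space is diametrically regular, the perimeter measure is Borel regular and finite on bounded open sets (so $\sigma$-finite through a countable open cover by balls), the family of closed balls of small diameter finely covers $\Rd$, and the absolute continuity $\Per\ll \cS_0^{Q-1}\res \Rd$ is precisely what \eqref{intro:perimBeta} gives. Consequently
\[
\Per(B)=\int_B \theta^{Q-1}(\Per,x)\,d\cS_0^{Q-1}(x)
\]
for every Borel $B\subset \Rd$, and the task reduces to the $\cS_0^{Q-1}$-a.e.\ pointwise identification of the spherical Federer density along the reduced boundary.

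Fix $x\in \Rd$ at which $\G$-rectifiability supplies a blow-up of $E$ to a vertical half-space $H_{\nu_E(x)}$ with horizontal normal $\nu_E(x)$: the rescaled perimeter measures of $E$ under the intrinsic dilations $\delta_{1/r}$ centered at $x$ converge, as $r\to 0^+$, to the perimeter measure of $H_{\nu_E(x)}$ on bounded open sets. The next idea is to exploit the asymmetric form \eqref{eq:FDens} of the Federer density, in which the supremum runs over all closed balls $\B\in\cF_b$ merely \emph{containing} $x$, not only those centered at $x$. Writing any such ball as $\B(x\cdot \delta_r(y),r)$ with $d(0,y)\le 1$, the ratio
\[
\frac{\Per(\B(x\cdot \delta_r(y),r))}{c_{Q-1}\,\diam(\B(x\cdot \delta_r(y),r))^{Q-1}}
\]
converges, along suitable sequences $r_k\to 0^+$, to
\[
\frac{|\der_H H_{\nu_E(x)}|(\B(y,1))}{c_{Q-1}\,\diam(\B(y,1))^{Q-1}}.
\]
Taking first the supremum over admissible offsets $y$ and then observing, by left-invariance of the group and of the perimeter of half-spaces, that the result depends only on the homogeneous distance $d$ and on the direction $\nu_E(x)$, one identifies this supremum with the geometric quantity $\beta(d,\nu_E(x))$. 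Hence $\theta^{Q-1}(\Per,x)=\beta(d,\nu_E(x))$ at $\cS_0^{Q-1}$-a.e.\ $x\in \Rd$, which combined with the representation above yields \eqref{eq:perimBeta}.

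The delicate step is the passage to the limit on \emph{off-center} balls. Classical blow-up arguments in the sub-Riemannian setting rely on test balls centered at the base point, whereas \eqref{eq:FDens} forces testing along arbitrary closed balls containing $x$ whose diameter shrinks to zero. One needs a form of weak convergence of rescaled perimeter measures that is stable under pullback of characteristic functions of arbitrary closed balls, together with a density argument ensuring that the limit half-space $H_{\nu_E(x)}$ carries no perimeter mass on the boundaries of these test balls for a dense set of offsets $y$; otherwise the ratio above could jump in the limit and the geometric identification of $\beta$ would fail. Making this refined, asymmetric blow-up work in the absence of the Besicovitch covering theorem --- the "new blow-up method" announced in the abstract --- is the technical core of the proof; once it is in place, the identification of $\beta(d,\nu_E(x))$ is a formal consequence of the left-invariance and homogeneity of the perimeter measure on half-spaces of $\G$.
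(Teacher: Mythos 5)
Your overall strategy diverges from the paper's at a crucial point, and this divergence creates a genuine gap. You propose to apply Theorem~\ref{the:metricspherical} directly to $\mu=\Per$ and then to compute the spherical Federer density $\theta^{Q-1}(\Per,x)$ itself via an off-center blow-up of $E$ at $x\in\Rd$. The paper instead never blows up $\Per$ off-center. It computes $\theta^{Q-1}(\sigma_{\Sigma_j},x)$ for the smooth parametrized $\G$-regular hypersurfaces $\Sigma_j$ covering $\Rd$ (Theorem~\ref{the:persig}), obtains $\sigma_{\Sigma_j}=\beta(d,\nu_{\Sigma_j})\,\cS_0^{Q-1}\res\Sigma_j$ (Theorem~\ref{the:areaSigma}), and then transfers to $\Per$ by a \emph{centered} Lebesgue-type differentiation $\sigma_E(x)=\lim_{r\to0^+}\mu_j(\B(x,r))/\Per(\B(x,r))=1$, valid $\Per$-a.e.\ since $\Per$ is asymptotically doubling. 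That centered comparison is fed by three ingredients: the centered blow-up of $\sigma_{\Sigma_j}$ coming from the explicit implicit-function parametrization (giving an unconditional limit as $r\to0^+$), the AKLD09 tangent-halfspace result for $\Per$ (giving a limit only along some subsequence $r_k\to0$), and the elementary identity \eqref{eq:Zx}. The subsequential nature of AKLD09 is harmless precisely because the full limit in \eqref{eq:lim1} already exists by the differentiation theorem for asymptotically doubling measures.

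The gap in your argument is precisely the step you label the ``technical core.'' At a generic $x\in\Rd$ the only blow-up information for the genuine finite-perimeter set $E$ is the AKLD09 result: $Z(\nu_E(x))\in\Tan(E,x)$, which yields weak-$*$ convergence of the rescaled perimeters $|\partial_HE_{x,r}|$ to $|\partial_HZ(\nu_E(x))|$ only along \emph{some} infinitesimal sequence. To identify $\theta^{Q-1}(\Per,x)$ you must control $\Per(\B(y_k,t_k))/t_k^{Q-1}$ along a sequence $(t_k,y_k)$ that is forced on you by the definition of the Federer density --- it nearly attains the sup --- and you have no freedom to align it with the AKLD09 subsequence. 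You also observe that the moving test sets $\B(\delta_{1/t_k}(x^{-1}y_k),1)$ require a ``no-mass-on-the-boundary'' argument, but even granting that, weak-$*$ convergence evaluated on moving closed sets only yields a $\limsup$ bound, and that bound is along the wrong sequence. For a general $E$ there is no implicit-function parametrization that would upgrade the subsequential weak-$*$ convergence to the unconditional, dominated-convergence-friendly control that the proof of Theorem~\ref{the:persig} enjoys for $\sigma_\Sigma$. This is exactly why the paper carries out the off-center blow-up only for $\sigma_\Sigma$, where the set $\Lambda_{1/t}(\Phi^{-1}(\B(y,t)))$ can be described explicitly and dominated, and confines the comparison with $\Per$ to centered balls. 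If you wish to salvage your direct route you would need a quantitative, off-center blow-up theorem for $\Per$ at $\Rd$-points that holds along all infinitesimal sequences --- which is essentially what Theorem~\ref{the:persig} plus $\G$-rectifiability plus the centered comparison deliver in the paper, just packaged differently.
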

Clearly, the $\G$-rectifiability of $\Rd$ is necessary and it is clearly a crucial fact also in the classical context of Euclidean spaces. 
However, in a general stratified group it is not yet clear whether all reduced boundaries $\Rd$ are automatically $\G$-rectifiable.
This is still an important open question.

Rather surprisingly, formula \eqref{eq:perimBeta} holds for an arbitrary homogeneous distance, with no regularity assumption, and it also finds an interesting relationship with the shape of the metric unit ball generated by the distance.
As an example of this fact, Theorem~\ref{the:homConv} shows that whenever the unit ball $\B(0,1)$ is convex, then 
\begin{equation}\label{eq:dv}
 \beta(d,v)=\cH^{n-1}\ls N(v)\cap\B(0,1)\rs.
\end{equation}
Thus, it turns out that the Federer density and the $(Q-1)$-density coincide when the metric unit ball is convex.
This provides a simpler formula that relates perimeter measure and spherical Hausdorff measure. 
The key to prove \eqref{eq:dv} is a concavity-type property for the areas of all parallel one codimensional slices of a convex body,
see \cite{Busem49} and Theorem~\ref{the:convexSection}.

For homogeneous distances where $\beta(d,\cdot)$ is a constant function, it is natural to set $\omega_{\G,Q-1}=\beta(d,\cdot)$ and to define the spherical Hausdorff measure $\cS_\G^{Q-1}$ by this constant. Precisely, we define $c_{Q-1}=\omega_{\G,Q-1}/2^{Q-1}$ in Definition~\ref{def:sphericalH}.
We also introduce a class of homogeneous distances having this property. We call these distances {\em $V_1$-vertically symmetric}, since they
are equipped by a suitable group of symmetries modeled on $V_1$,  see Definition~\ref{def:hsym}. By Theorem~\ref{the:constBeta}, all of these distances have $\beta(d,\cdot)$ equal to a geometric constant and this gives a simpler representation of the perimeter measure.
\begin{The}[Area formula for symmetric distances]\label{the:AreaPerimG}
Let $\G$ be a stratified group equipped with a $V_1$-vertically symmetric distance and let $E\subset\G$ be set of $h$-finite perimeter. If $\Rd$ is
$\G$-rectifiable, then
\begin{equation}\label{eq:perimBetaG}
\Per=\cS_\G^{Q-1}\res\Rd\,.
\end{equation}
\end{The}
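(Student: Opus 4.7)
The plan is to read Theorem~\ref{the:AreaPerimG} as a direct corollary of Theorem~\ref{the:AreaPerim}, combined with the constancy of $\beta(d,\cdot)$ provided by Theorem~\ref{the:constBeta}, and a bookkeeping comparison between the two normalizations of the spherical Hausdorff measure.

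First, since $E$ has $h$-finite perimeter and $\Rd$ is $\G$-rectifiable, the hypotheses of Theorem~\ref{the:AreaPerim} are satisfied, and it gives the representation
\[
\Per \;=\; \beta(d,\nu_E)\,\cS_0^{Q-1}\res\Rd.
\]
Second, under the assumption that $d$ is $V_1$-vertically symmetric, Theorem~\ref{the:constBeta} asserts that the function $v\mapsto \beta(d,v)$ is independent of $v$ on the horizontal unit sphere, with common value the geometric constant $\omega_{\G,Q-1}$. Since $\nu_E$ takes values in the horizontal unit sphere $\cS^{Q-1}\res\Rd$-almost everywhere, the scalar $\beta(d,\nu_E)=\omega_{\G,Q-1}$ factors out of the area formula, giving
\[
\Per \;=\; \omega_{\G,Q-1}\,\cS_0^{Q-1}\res\Rd.
\]

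Finally, I match the two normalizations of the spherical Hausdorff measure. By Definition~\ref{def:sphericalH}, $\cS_\G^{Q-1}$ is the spherical Hausdorff measure built with geometric constant $c_{Q-1}=\omega_{\G,Q-1}/2^{Q-1}$, while $\cS_0^{Q-1}$ is built with no geometric constant. Since the Carath\'eodory construction is linear in the geometric constant, one immediately has the pointwise identity $\cS_\G^{Q-1}=\omega_{\G,Q-1}\,\cS_0^{Q-1}$; substituting this above produces \eqref{eq:perimBetaG}. The only genuinely substantive input here is the constancy of $\beta(d,\cdot)$ for $V_1$-vertically symmetric distances encoded in Theorem~\ref{the:constBeta}; once that is available, Theorem~\ref{the:AreaPerimG} reduces to normalizing constants, which is why I expect no genuine obstacle in the final assembly.
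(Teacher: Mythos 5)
Your proposal is correct and follows the exact route the paper implicitly intends: Theorem~\ref{the:AreaPerim} gives $\Per=\beta(d,\nu_E)\,\cS_0^{Q-1}\res\Rd$, Theorem~\ref{the:constBeta} makes $\beta(d,\cdot)\equiv\omega_{\G,Q-1}$, and the convention $c_{Q-1}=\omega_{\G,Q-1}/2^{Q-1}$ from Definition~\ref{def:sphericalH} yields $\cS_\G^{Q-1}=\omega_{\G,Q-1}\,\cS_0^{Q-1}$, absorbing the constant. The paper does not write out a separate proof, since the introduction explicitly sets up Theorem~\ref{the:constBeta} precisely so that Theorem~\ref{the:AreaPerimG} follows from Theorem~\ref{the:AreaPerim} by renormalization, which is what you did.
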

In each stratified group, we can find a homogeneous distance whose unit ball $\B(0,1)$ coincides with an Euclidean ball of sufficiently small radius, see Theorem~2 of \cite{HebSik90}. In particular, this distance satisfies the hypotheses of Theorem~\ref{the:AreaPerimG}. Many other examples of important distances with this symmetry property are available. We mention for instance the Cygan-Kor\'anyi distance, the distance $d_\infty$ of \cite{FSSC5} and the sub-Riemannian distance in the Heisenberg group, see Section~\ref{Sect:homball} and Section~\ref{Sect:Hsym}.




%
%
%
%
%
%
%
%
%
%
%
%
\section{Notation, terminology and basic facts}\label{Sect:Notions}
%
%
%
%
%
%

A {\em stratified group} can be seen as a graded linear space $\G=V_1\oplus\cdots\oplus V_\iota$ with graded Lie algebra $\cG=\cV_1\oplus\cdots\oplus\cV_\iota$, satisfying the properties $[\cV_1,\cV_j]=\cV_{j+1}$ for all integers $j\ge0$, where $\cV_{i}=\{0\}$ for all $i>\iota$ and $V_\iota\neq\{0\}$. This terminology is due to Folland, \cite{Fol75}.
Stratified groups equipped with a sub-Riemannian distance are also well known as Carnot groups, according to the terminology introduced by P.~Pansu. 

A {\em homogeneous distance} $d$ on $\G$ is a continuous and left invariant distance with $d(\delta_rx,\delta_ry)=r\,d(x,y)$ for all $x,y\in\G$ and $r>0$. We define the open and closed balls \[ B(y,r)=\lb z\in\G: d(z,y)<r\rb\quad\mbox{and}\quad \B(y,r)=\lb z\in\G: d(z,y)\le r \rb\,. \]
The corresponding homogeneous norm is denoted by $\|x\|=d(x,0)$ for all $x\in\G$.
When we wish to stress that the stratified group is equipped with a homogeneous distance we may use the terminology {\em homogeneous stratified group}, although often the homogeneous distance will be understood.
In fact, throughout a homogeneous distance will be fixed and $\Omega$ will denote an open subset of a stratified group $\G$.
Notice that any homogeneous distance is bi-Lipschitz equivalent to the SR distance.
%
%
\begin{comment} This grading automatically defines projections $\pi_j:\cG\to\cV_j$ that also extend to the single tangent spaces $T_p\G$ and take values in the fibers 
\[ H^j_x\G=\{X(x): X\in\cV_j\}.\]
With a slight abuse of notation, we will use the same symbol $\pi_j$ for these projections.
This allows to define for every $C^1$ smooth $f$ the {\em horizontal differential} without referring to a scalar product
\[
 d_hf(x)(v)=df(x)\circ\pi_1(v)
\]
for every $v\in T_x\G$. An equivalent way is to define $d_Hf$ only on $H^1_x\G$.
\end{comment}
%
%
%

In our terminology, a {\em $C_h^1$ smooth function} $f:\Omega\to\R$ on an open set $\Omega$ of a stratified group $\G$ has the property that for all $x\in\Omega$ and $X\in\cV_1$ the {\em horizontal derivative}
\[
 Xf(x)=\lim_{t\to0}\frac{f\ls \Phi^X_t(x)\rs-f(x)}{t}
\]
exists and it is continuous in $\Omega$, where $\Phi^X$ denotes the flow of $X$. We denote by $\cC_h^1(\Omega)$ the linear space of all $\cC^1_h$ smooth functions on $\Omega$. 

We also define $d_hf(x):H_x\G\to\R$ as follows
\[ d_hf(x)(w)=\cW f(x), \]
where $w\in H^1_x\G$ and $\cW\in\cV_1$ is the unique left invariant vector field such that $\cW(x)=w$.
For every $j=1,\ldots,\iota$, we have also defined 
\[ H^j_x\G=\{X(x)\in T_x\G\mid  X\in\cV_j\}.\]
The class of $C^1_h$ smooth functions has an associated implicit function theorem.
%
%
%
%
\begin{The}[Implicit function theorem]\label{the:implicit}
Let $x\in\Omega$, $X\in\cV_1$ and $f\in\cC_h^1(\Omega)$ with $Xf(x)\neq0$. Let $N_1\subset V_1$ be the kernel of $d_hf(x)$, let $N=N_1\oplus V_2\oplus\cdots\oplus V_\iota$ and let $H=\R v_X$, where $v_X=\exp X$. Then we have an open set $V\subset N$ with $0\in V$, a continuous function $\ph:V\to H$ and an open neighborhood $U\subset\G$ of $x$, such that
\begin{eqnarray}\label{impleq}
f^{-1}\ls f(x)\rs\cap U=\big\{xn \ph(n)\mid n\in V\big\}.
\end{eqnarray}
\end{The}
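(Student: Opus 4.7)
The plan is to reduce the statement to a one-dimensional monotonicity argument along the integral curves of $X$, which is the only direction in which we have usable differentiability.

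First I would parametrize a neighborhood of $x$ by the map
\[
\Psi:N\times\R\to\G,\qquad \Psi(n,s)=xn\exp(sX).
\]
Since $\cG=N\oplus\R v_X$ as vector spaces and the map $(n,s)\mapsto n\exp(sX)$ has full-rank differential at $(0,0)$, the map $\Psi$ is a local diffeomorphism in exponential coordinates and hence a local homeomorphism sending an open neighborhood of $(0,0)$ in $N\times\R$ onto an open neighborhood of $x$ in $\G$. Thus every point near $x$ admits a unique representation $xn\exp(sX)$ with $(n,s)$ small.

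Next I would consider, for each small $n\in N$, the real function
\[
g_n(s)=f\ls xn\exp(sX)\rs=f\ls\Phi^X_s(xn)\rs,
\]
using that $X\in\cV_1$ is left invariant, so $\Phi^X_s(y)=y\exp(sX)$. By the very definition of $Xf$, the function $g_n$ is of class $C^1$ in $s$ with $g_n'(s)=Xf\ls\Phi^X_s(xn)\rs$, and by continuity of $Xf$ this derivative is jointly continuous in $(n,s)$. Since $g_0'(0)=Xf(x)\neq 0$, there exist a neighborhood $V'\subset N$ of $0$ and $\delta>0$ such that $Xf\neq 0$ on the whole image $\Psi(V'\times[-\delta,\delta])\subset\Omega$, with constant sign. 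Consequently each $g_n$ with $n\in V'$ is strictly monotone on $[-\delta,\delta]$.

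Since $g_0(\pm\delta)\neq f(x)=g_0(0)$, joint continuity gives a smaller neighborhood $V\subset V'$ of $0$ on which $g_n(-\delta)$ and $g_n(\delta)$ still lie on opposite sides of $f(x)$. The intermediate value theorem, combined with strict monotonicity, yields a unique $s=\ph(n)\in(-\delta,\delta)$ with $g_n(\ph(n))=f(x)$. Continuity of $\ph$ is then automatic: if $n_k\to n$ in $V$, any accumulation point $s^*$ of $\ph(n_k)$ satisfies $g_n(s^*)=f(x)$ by continuity of $g$, and uniqueness forces $s^*=\ph(n)$. Setting $U=\Psi\ls V\times(-\delta,\delta)\rs$, which is open by the local homeomorphism property of $\Psi$, the uniqueness of the representation $xn\exp(sX)$ inside $U$ gives exactly \eqref{impleq}.

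The only point that needs some care is the passage from horizontal differentiability along a single direction $X$ to the $C^1$ regularity of the real-valued map $s\mapsto g_n(s)$; this is the main conceptual obstacle, since no classical implicit function theorem is available in this setting. It is resolved by observing that $s\mapsto\Phi^X_s(xn)$ is precisely the integral curve of the left invariant field $X$ through $xn$, so the derivative of $g_n$ at every point of $[-\delta,\delta]$ is given by $Xf$ evaluated on that curve, and the hypothesis $f\in\cC_h^1(\Omega)$ ensures this derivative is continuous. All the remaining steps are standard one-variable arguments.
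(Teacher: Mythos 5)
Your proof is correct and follows the same route the paper intends: pass to the coordinates $(n,s)\mapsto xn\exp(sX)$, observe that $\partial_s f\ls xn\exp(sX)\rs = Xf\ls xn\exp(sX)\rs$ is continuous and nonvanishing near $x$, and solve for $s=\varphi(n)$ by strict monotonicity plus the intermediate value theorem, with continuity of $\varphi$ from uniqueness and compactness. You have simply spelled out the one-variable implicit-function argument that the paper compresses into the phrase ``an immediate consequence of the Euclidean implicit function theorem, once the proper system of coordinates is fixed''; the only cosmetic adjustment needed is to let $\varphi$ take values in $H=\R v_X$ rather than in $\R$, i.e.\ replace your real-valued $\varphi(n)$ by $\exp\ls\varphi(n)X\rs$.
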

This result is an immediate consequence of the Euclidean implicit function theorem, once the proper system of coordinates is fixed. It shows that regular level sets of $C^1_h$ smooth functions are locally graphs with respect to the group operation,  \cite{FSSC4}. 
%
%
\begin{comment}
Setting $F(n,t)=f(n\exp t X)$, then $\der_t F(n,t)=Xf(n\exp tX)>0$ is also continuous and the implicit function theorem gives us $\ph(n)$ such that \[f(n,\ph(n))=f(n_0,t_0)\] when $n$ varies around $n_0$.
\end{comment}
%
A more general implicit function theorem for mappings between two stratified groups $\G$ and $\M$ can be also obtained.
The new algebraic and topological difficulties of this case are partially overcome using the topological degree and assuming special algebraic factorizations of the source space. Level sets of these mappings define the general class of $(\G,\M)$-regular sets of $\G$, see \cite{Mag5} and \cite{Mag14} for more information. For the purposes of this work, the next definition refers to the case $\M=\R$. In this special case, these sets have first appeared in \cite{FSSC4} and called $\G$-regular hypersurfaces.
\begin{Def}\label{def:GR}\rm
We say that a subset $\Sigma\subset\G$ is a {\em parametrized $\G$-regular hypersurface} if there exists $f\in\cC^1_h(\Omega)$ such that $d_hf$ is everywhere nonvanishing and $\Sigma=f^{-1}(0)$. 
\end{Def}
%
%
%
%
%
%
A {\em graded basis} $(e_1,\ldots,e_n)$ of $\G$ is defined by assuming that the families of vectors
\[
(e_{m_{j-1}+1},e_{m_{j-1}+2},\ldots,e_{m_j})
\]
are bases of the subspaces $V_j$ and $m_j=\sum_{i=1}^j\dim V_i$ for every $j=1,\ldots,\iota$, where $m_0=0$. We also set $m=m_1$. 

In the sequel, a graded basis is fixed and the corresponding Lebesgue measure $\cL^n$ is automatically defined on $\G$. Its left invariance makes it proportional to the {\em Haar measure} $\mu$ of $\G$. 
We fix an auxiliary scalar product on $\G$, whose restriction to $V_1$ defines the left invariant sub-Riemannian metric $g$ on each horizontal fiber $H_x\G$. In particular, this scalar product is chosen to make the previous graded basis orthonormal. We denote by $|\cdot|$ the associated norm. The choice of this scalar product for instance appears in Definition~\ref{def:Nnull}, where we use the Euclidean $n-1$ dimensional Hausdorff measure $\cH^{n-1}$.
%
%
%
\begin{comment} 
It should be verified that our approach only needs a scalar product on $V_1$ that defines a corresponding left invariant sub-Riemannian metric $g$ defined on each horizontal fiber $H_x\G$. In particular, this scalar product is chosen to make the basis $(e_1,\ldots,e_m)$ of $V_1$ orthonormal. The fixed scalar product on $V_2\oplus\cdots\oplus V_\iota$ should only need for the Euclidean $n-1$ dimensional Hausdorff measure $\cH^{n-1}$, that is used for $\beta(d,\cdot)$ etc.
A problem to better understand is whether we can limit ourselves to consider more general scalar products
that only make $V_1$ orthogonal to $V_2\oplus\cdots V_\iota$. We use the smaller class of scalar products that
make all layers orthogonal since we are applying the result of \cite{FSSC5} that understand this assumption.
My guess is that one could consider more general scalar products.
\end{comment}
%
%
%
\begin{comment}
My guess is that this additional scalar product on $V_2\oplus\cdots\oplus V_\iota$ could be avoided by considering the product $i_X\omega$, where $\omega$ is the fixed volume form on $\G$ and $X$ is a fixed horizontal unit left invariant vector field.
\end{comment}

For each parametrized $\G$-regular hypersurface $\Sigma\subset\G$ defined by $f:\Omega\to\R$, we may define the {\em intrinsic measure} of $\Sigma$ through the {\em perimeter measure}
\begin{equation}\label{eq:sigmaSigma}
\sigma_\Sigma(O)=\sup\left\{\int_{E_f}\div X\,d\mu\, \Big|\, X\in C_c^1(O,H\G),\, |X|\le1  \right\}
\end{equation}
for any open set $O\subset\Omega$, where $E_f=\{x\in \Omega: f(x)<0\}$.
The symbol $\div$ denotes the divergence operator with respect to the Haar measure $\mu$, that in our coordinate system gives the standard divergence operator.
%
%
%
\begin{comment} We notice that the definition of $\sigma_\Sigma$ may depend a priori on the defining function $f$, so its computation according to Theorem~\ref{the:areaSigma}, makes the role of $f$ immaterial.
\end{comment}
%
%
%
%
The {\em horizontal normal} for a parametrized $\G$-regular hypersurface $\Sigma$ with defining function $f$ is defined for each $y\in\Sigma$ as follows
\begin{eqnarray}\label{inwrep}
 \nu_\Sigma(y)=\frac{\nabla_Hf(y)}{|\nabla_Hf(y)|}\,,
 \end{eqnarray}
where $(X_1,\ldots,X_m)$ is an orthonormal basis of $\cV_1$ and $\nabla_Hf=(X_1f,\ldots,X_mf)$.
For our purposes we do not claim an orientation, hence a sign, for the horizontal normal.
The following definition requires a little regularity of the unit ball.
%
%
%
%
\begin{Def}[Vertical subgroups and geometric constants]\label{def:Nnull}\rm
For each $\nu\in V_1\sm\{0\}$, we define its corresponding {\em vertical subgroup}
$N(\nu)=\nu^\bot\oplus V_2\oplus\cdots\oplus V_\iota$, where $\nu^\bot$ is the subspace
of $V_1$ that is orthogonal to $\nu$. Then we define
\[
 \beta(d,\nu)=\max_{z\in\B(0,1)}\cH^{n-1}\ls\B(z,1)\cap N(\nu)\rs.
\]
\end{Def}
%
%
\begin{comment} 
A homogeneous distance $d$ is {\em $(n-1)$-vertically regular} if for each $\nu\in V_1\sm\{0\}$ there exists an element $z(\nu)\in\B(0,1)$ such that
\begin{equation}
 \cH^{n-1}\Ls N(\nu)\cap\der\B\ls z(\nu),1\rs\Rs=0\quad\mbox{and}\quad \beta(d,\nu)=\cH^{n-1}\ls\B(z(\nu),1)\cap N(\nu)\rs.
\end{equation}
\end{comment}
%
%
%
%

%
%
%
%
%
%
%
%
%
\section{Upper blow-up of the perimeter measure}
%
%
%
%
%
%
%
%

In this section we prove the central result for this work, namely, a new blow-up theorem
for the perimeter measure. We recall here our basic notation.
%
%
%
\begin{Def}\label{def:sphericalH}\rm 
Let $\cF\subset\cP(\G)$ be a nonempty class of closed sets, let $\alpha>0$ and consider $c_\alpha>0$ as a suitable geometric constant. 
If $\delta>0$ and $E\subset X$, we define
\begin{equation*}
\phi_\delta(E)=\inf \bigg\lbrace\sum_{j=0}^\infty c_\alpha\,\diam(B_j)^\alpha: E\subset \bcup_{j\in\N} B_j ,\, \diam(B_j)\le\delta,\,
B_j\in\cF \bigg\rbrace\,.
\end{equation*}
If $\cF$ is the family of closed balls, the {\em $\alpha$-dimensional spherical Hausdorff measure} is
\[
\cS^\alpha(E)=\sup_{\delta>0}\phi_\delta(E).
\]
When $c_\alpha=2^{-\alpha}$, we use the symbol $\cS_0^\alpha$.
In the case $\cF$ is the family of all closed sets, $\alpha=k$ is a positive integer less than the linear dimension of $\G$,
$c_k=\omega_k$ and the Euclidean distance is fixed on $\G$, then the previous construction yields the Euclidean $k$-dimensional Hausdorff measure on $\G$,
that we denote by  $\cH^k$.
\end{Def}
%
%
%
%
%
%
%
%
\begin{The}[Upper blow-up]\label{the:persig}
Let $\Sigma$ be a parametrized $\G$-regular hypersurface and let $x\in\Sigma$. If $\sigma_\Sigma$ is its associated perimeter measure, then\begin{equation}\label{intro:thetabeta}
 \theta^{Q-1}(\sigma_\Sigma,x)=\beta\ls d,\nu_\Sigma(x)\rs\,.
\end{equation}
\end{The}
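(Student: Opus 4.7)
The plan is a blow-up analysis of $\sigma_\Sigma$ at $x$. Write $\Sigma = f^{-1}(0)$ with $f \in \cC_h^1(\Omega)$ and $d_h f$ nowhere vanishing, and consider the rescalings $f_r(y) := f(x \cdot \delta_r y)/r$. Using that $\delta_r$ is a group automorphism and $\delta_r \exp(tX) = \exp(rtX)$ for $X \in \cV_1$, one computes $Xf_r(y) = Xf(x \cdot \delta_r y)$, so by continuity of $\nabla_H f$ the horizontal gradients $\nabla_H f_r$ converge locally uniformly to $\nabla_H f(x)$ as $r \to 0^+$, and $f_r$ itself converges locally uniformly to the unique group homomorphism $L : \G \to \R$ with $L|_{V_1} = d_h f(x)$. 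The zero set of $L$ is exactly the vertical subgroup $N(\nu_\Sigma(x))$, and the rescaled subgraphs $\delta_{1/r}(x^{-1} E_f)$ converge in $L^1_{\mathrm{loc}}$ to the halfspace $H^- := \{L < 0\}$. Combining the $(Q-1)$-homogeneity of perimeter under $\delta_r$ with lower semicontinuity under $L^1_{\mathrm{loc}}$-convergence, together with a matching upper bound from local uniform continuity of $\nabla_H f$, I conclude that the rescaled measures
\[
\sigma_{\Sigma,r} \;:=\; \frac{1}{r^{Q-1}}\,(\delta_{1/r})_\#(x^{-1})_\#\,\sigma_\Sigma
\]
converge weakly, as Radon measures on $\G$, to the perimeter of $H^-$, which by direct computation equals $\cH^{n-1}\res N(\nu_\Sigma(x))$.

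The Federer density admits a clean reformulation via these rescaled measures. Since $\diam \B(y, r) = 2r$ and $c_{Q-1} = 2^{-(Q-1)}$, setting $z := \delta_{1/r}(x^{-1} y)$ yields
\[
\frac{\sigma_\Sigma(\B(y, r))}{c_{Q-1}\,\diam\B(y, r)^{Q-1}} \;=\; \sigma_{\Sigma,r}(\B(z, 1)),
\]
with $x \in \B(y, r)$ iff $z \in \B(0, 1)$. For the upper estimate, I would select sequences $r_k \to 0^+$ and $z_k \in \B(0, 1)$ whose ratios realize $\theta^{Q-1}(\sigma_\Sigma, x)$ in the limit, extract by compactness a subsequence $z_k \to z^* \in \B(0, 1)$, and use continuity of $d$ to ensure $\B(z_k, 1) \subset \B(z^*, 1+\delta)$ for large $k$. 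Applying the upper bound of weak convergence on this closed ball and letting $\delta \downarrow 0$ (using continuity of $\cH^{n-1}\res N(\nu_\Sigma(x))$ along the decreasing family $\B(z^*,1+\delta)$ with intersection $\B(z^*,1)$) gives
\[
\theta^{Q-1}(\sigma_\Sigma, x) \;\leq\; \cH^{n-1}\bigl(\B(z^*, 1) \cap N(\nu_\Sigma(x))\bigr) \;\leq\; \beta(d, \nu_\Sigma(x)).
\]

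For the matching lower estimate, pick $z^* \in \B(0, 1)$ attaining the maximum in Definition~\ref{def:Nnull}. For each small $\delta > 0$, set $y_r := x \cdot \delta_r(z^*)$ and consider the admissible balls $\B(y_r, (1+\delta)r)$: they contain $x$ since $d(y_r, x) = r\|z^*\| \leq r$, and in rescaled coordinates become $\B(\delta_{1/(1+\delta)}(z^*), 1)$. From $\B(z^*, 1) \subset B(z^*, 1+\delta)$ (open ball, by continuity of $d$) combined with the $(Q-1)$-scaling of $\cH^{n-1}$ on $N(\nu_\Sigma(x))$ under $\delta_{1/(1+\delta)}$ one obtains
\[
\cH^{n-1}\bigl(B(\delta_{1/(1+\delta)}(z^*), 1) \cap N(\nu_\Sigma(x))\bigr) \;\geq\; (1+\delta)^{-(Q-1)}\,\beta(d, \nu_\Sigma(x)).
\]
The lower bound of weak convergence on this open set, applied along $r \to 0^+$ and plugged into the definition of the Federer density, yields $\theta^{Q-1}(\sigma_\Sigma, x) \geq (1+\delta)^{-(Q-1)}\,\beta(d, \nu_\Sigma(x))$; letting $\delta \to 0^+$ concludes the argument. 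The technical heart of the proof is the first paragraph, namely justifying weak convergence of $\sigma_{\Sigma,r}$ to $\cH^{n-1}\res N(\nu_\Sigma(x))$ assuming only $\cC_h^1$ regularity of $f$ and only continuity of the homogeneous distance $d$; the dilated-ball trick in the lower bound is precisely what sidesteps the potential loss of mass on $\partial\B$ arising from the possibly irregular shape of the unit ball.
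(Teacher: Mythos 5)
Your overall architecture matches the paper's: a blow-up at $x$, an upper estimate obtained by extracting a convergent sequence of rescaled centers $z_k\to z^*\in\B(0,1)$ and bounding by $\cH^{n-1}(\B(z^*,1)\cap N)\le\beta(d,\nu_\Sigma(x))$, and a lower estimate obtained by placing the center at the optimal point $z_0$ and slightly inflating the radius (your $1+\delta$ is exactly the paper's $\lambda>1$) so that only \emph{open} balls are needed and no mass is lost on $\partial\B$. The genuine difference is in how the convergence is organized. The paper never invokes weak-$*$ convergence of measures: it works directly with the integral representation of $\sigma_\Sigma$ on the graph $\Phi(n)=xn(\ph(n)e_{X_1})$, changes variables by $\Lambda_t=\delta_t|_N$ to extract the factor $t^{Q-1}$, and then applies dominated convergence to the indicator functions of the rescaled domains $A_k$, the delicate point being the pointwise limit $\ph(\delta_t w)/t\to0$ for the merely continuous graph function $\ph$ (handled via the stratified mean value inequality). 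Your route packages the same information as weak-$*$ convergence of $\sigma_{\Sigma,r}$ to $\cH^{n-1}\res N(\nu_\Sigma(x))$, derived from $L^1_{\mathrm{loc}}$ convergence of the rescaled subgraphs plus convergence of perimeters. This is cleaner to quote in the two estimates (upper semicontinuity on compacts, lower on opens), and the locally uniform convergence $f_r\to L$ that you use is in fact slightly stronger than the pointwise statement the paper proves.

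The one place you should not wave your hands is the ``matching upper bound'' $\limsup_r|\partial_H E_r|(\B_R)\le|\partial_H H^-|(\B_R)$: local uniform continuity of $\nabla_Hf$ alone does not yield it. You still need the representation formula of Franchi--Serapioni--Serra Cassano for the perimeter of a subgraph together with control of the rescaled graph domain, i.e.\ of $\ph_r(\eta)=\ph(\delta_r\eta)/r$; this is exactly where the paper's inclusions \eqref{eq:key_inclusion} and the limit \eqref{eq:ph0} live, and it is nontrivial precisely because $\ph$ is only continuous. The fix is available within your own framework: an a priori compactness bound on $\ph_r$ (as in \eqref{eq:key_inclusion}) combined with $\|f_r-L\|_{\infty}\to0$ on compact sets and $L(\eta\cdot te_{X_1})=tX_1f(x)$ for $\eta\in N$ gives $\ph_r\to0$ locally uniformly, from which both the $L^1_{\mathrm{loc}}$ convergence of the subgraphs and the convergence of the perimeter masses follow. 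With that step written out, your argument is a complete and correct variant of the paper's proof.
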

\begin{proof} We consider $f$ as the defining function of $\Sigma$ and select $X_1\in\cV$ such that $X_1(x)$ has unit length and it is orthogonal to the kernel of $d_Hf(x)$. Let $X_2,\ldots,X_m\in\cV_1$ be such that $(X_2(x),\ldots,X_m(x))$ is an orthonormal basis of this kernel. By Theorem~\ref{the:implicit}, we have an open neighborhood $V\subset N$ of the origin, with $N=\ker d_hf(x)\oplus V_2\oplus\cdots\oplus V_\iota$, a continuous function $\ph:V\to\R$, the vector $v_{X_1}=\exp X_1\in\G$ and an open neighborhood $U\subset\G$ of $x$, such that
\begin{eqnarray}\label{eq:nphn}
\Sigma\cap U=\lb xn (\ph(n)e_{X_1})\mid n\in V\rb\,.
\end{eqnarray}
Up to changing the sign of $f$ and possibly shrinking $U$, we assume that 
$X_1f>0$ bounded away from zero on $U$.
%
%
\begin{comment} 
$X_1f\geq\alpha>0$ everywhere on $U$. 
\end{comment}
%
%
To define the intrinsic measure of $\Sigma$, we define the open set
\begin{eqnarray*}
E_f=\{y\in U\mid f(y)<0\}=\lb xn(te_{X_1})\in U|\, n\in V,\, t<\ph(n)\}.
\end{eqnarray*}
From \cite{FSSC4}, this set has finite perimeter and defining the graph mapping $\Phi:V\to\Sigma$ by $\Phi(n)=xn\ls\ph(n)e_{X_1}\rs$ for every $n\in V$, we also have the formula
\begin{eqnarray*}
\sigma_\Sigma\ls \B(y,t)\rs=|\der E|_H\ls \B(y,t)\rs= \int_{\Phi^{-1}(\B(y,t))}
\frac{\sqrt{\sum_{j=1}^mX_jf(\Phi(\eta))^2}}{X_1f(\Phi(\eta))}\;d\cH^{n-1}(\eta)
\end{eqnarray*}
for $t>0$ small and $y\in U$. We make the change of variables $n=\Lambda_t\eta$, where $\Lambda_t:N\to N$ and
\[ \Lambda_t\eta=\sum_{j=2}^m t\,\eta_j e_j+\sum_{i=2}^\iota \sum_{j=m_{i-1}+1}^{m_i} t^i\eta_j e_j\,. \]
We notice that $\delta_t|_N=\Lambda_t$ and the Jacobian of $\Lambda_t$ is $t^{Q-1}$, where $Q$ is the Hausdorff dimension of $\G$. By a change of variables, we get
\begin{equation}\label{eq:sigmat}
\sigma_\Sigma\ls \B(y,t)\rs=t^{Q-1} \int_{\Lambda_{1/t}\big(\Phi^{-1}(\B(y,t))\big)}
\frac{\sqrt{\sum_{j=1}^mX_jf(\Phi(\Lambda_t\eta))^2}}{X_1f(\Phi(\Lambda_t\eta))}\;d\cH^{n-1}(\eta)\,.
\end{equation}
The general definition of spherical Federer's density, \cite{Mag30}, in our setting gives
\begin{equation}\label{eq:sphF}
\theta^{Q-1}(\sigma_\Sigma,x)=\inf_{r>0}\sup_{\substack{y\in\B(x,t)\\ 0<t<r}}\frac{\sigma_\Sigma\ls\B(y,t)\rs}{t^{Q-1}}\,.
\end{equation}
In view of \eqref{eq:sigmat}, to find $\theta^{Q-1}(\sigma_\Sigma,x)$ we first observe that the sets
\begin{equation}\label{eq:Lambdat}
\Lambda_{1/t}\ls\Phi^{-1}(\B(y,t))\rs=\Big\{\eta\in \Lambda_{1/t}V\mid \ls\delta_{1/t}(y^{-1}x)\rs\,\eta\,\bigg(\frac{\ph\ls\delta_t\eta\rs}{t}e_{X_1}\bigg) \in\B(0,1)\Big\}
\end{equation}
are uniformly bounded uniformly with respect to $t$. 
Then for $t>0$ sufficiently small we have a compact set $K_0$ such that
\begin{equation}\label{eq:key_inclusion}
 \Lambda_{1/t}\ls\Phi^{-1}(\B(y,t))\rs\subset K_0\,.
\end{equation}
The first consequence of this inclusion is that $\theta^{Q-1}(\sigma_\Sigma,x)<+\infty$, hence there exist a sequence $\{t_k\}\subset(0,+\infty)$ converging to zero and a sequence of elements $y_k\in\B(x,t_k)$ such that
\[ 
\theta^{Q-1}(\sigma_\Sigma,x)=\lim_{k\to\infty} \int_{\Lambda_{1/t_k}\big(\Phi^{-1}(\B(y_k,t_k))\big)}
\frac{\sqrt{\sum_{j=1}^mX_jf(\Phi(\Lambda_{t_k}\eta))^2}}{X_1f(\Phi(\Lambda_{t_k}\eta))}\;d\cH^{n-1}(\eta)\,.
\]
Possibly extracting a subsequence, there exists $z\in\B(0,1)$ such that
\begin{equation}\label{eq:deltak}
\delta_{1/t_k}(y_k^{-1}x)\to z^{-1}\in\B(0,1).
\end{equation}
Setting $S_z=N\cap\B(z,1)$, we wish to show that for each $w\in N\sm S_z$ there holds
\begin{equation}\label{eq:A_k}
 \lim_{k\to\infty}{\bf 1}_{A_k}(w)=0\,,
\end{equation}
where $A_k=\Lambda_{1/t_k}\ls\Phi^{-1}(\B(y_k,t_k))\rs$. 
For this, we have to prove that
\begin{equation}\label{eq:ph0}
\lim_{t\to0^+} \frac{\ph(\delta_tw)}{t}=0.
\end{equation}
Since $\ph$ is only continuous, this makes the proof of this limit more delicate.
We define 
$$
A(w)=\Big\{t\in\R\mid t>0,\,  \ph(\delta_tw)\neq0\Big\}\,.
$$
If $\overline{A(w)}$ does not contain zero, the limit \eqref{eq:ph0} becomes obvious.
If $0\in \overline{A(w)}$, then we choose an arbitrary infinitesimal sequence $\{\tau_k\}\subset A(w)$.
Using the stratified mean value inequality in (1.41) of \cite{FS82}, we notice that
%
%
\begin{comment}
We define the linear mapping $L:\G\to\R$ by $L=L\circ\pi_1$, where $\pi_1:\G\to V_1$ is the canonical projection and
\[
 X_jL(x)=L(\exp X_j)=X_jf(x)\quad \mbox{for all $x\in\G$ and $j=1,\ldots,m$}\,.
\]
The Folland-Stein's stratified mean valued inequality applied to $f(z)-L(z)$ gives 
\begin{eqnarray*}
|\ls f(xy)-L(xy)\rs-\ls f(x)-L(x)\rs|&=&|f(xy)-f(x)-L(y)| \\
&\le & C \|y\|\,\max_{z: \|z\|\le b \|y\|} |X_jf(xz)-X_jf(x)|
\end{eqnarray*}
where $y\in V$ and $\diam V$ is so small that $\B(x,b\,\diam V)\subset U$.
For $y=\delta_{\tau_k}w$, since $L(\delta_{\tau_k}w)=0$, for $k$ large the previous
inequalities and the fact that $f(x)=0$ give
\begin{eqnarray*}
|\ls f(x\delta_{\tau_k}w)-L(x\delta_{\tau_k}w)\rs-\ls f(x)-L(x)\rs|&=&|f(x\delta_{\tau_k}w)| \\
&\le & C\, \tau_k\,\|w\|\,\max_{z: \|z\|\le b \tau_k\,\|w\|} |X_jf(xz)-X_jf(x)| \\
&=& \tau_k\,\|w\|\,o(1)\,,
\end{eqnarray*}
hence $|f(x\delta_{\tau_k}w)|\tau_k^{-1}=o(1)$ as $\tau_k\to0$.
\end{comment}
%
%
%
\begin{equation*}
\lim_{k\to\infty}\frac{f(x\delta_{\tau_k}w)}{\tau_k}=\lim_{k\to\infty}\frac{f(x\delta_{\tau_k}w)-f(x)}{\tau_k}=0\,.
\end{equation*}
Since $\ph(\delta_{\tau_k}w)\neq0$, we can multiply and divide by $\ph(\delta_{\tau_k}w)$,
getting 
\begin{equation}
0=\lim_{k\to\infty}\frac{f(x\delta_{\tau_k}w)}{\tau_k}= \Big(-X_1f(x)\Big)\,\lim_{k\to\infty}\frac{\ph(\delta_{\tau_k}w)}{\tau_k}\,.
\end{equation}
%
%
\begin{comment}
\begin{eqnarray*}
0=\lim_{k\to\infty}\frac{f(x\delta_{\tau_k}w)}{\tau_k}&=&\lim_{k\to\infty}\frac{f(x\delta_{\tau_k}w)-f(x\delta_{\tau_k}w\ph(\delta_{\tau_k}w)e_{X_1})}{\tau_k} \\
&=&\lim_{k\to\infty}\Big(\frac{f(x\delta_{\tau_k}w)-f(x\delta_{\tau_k}w\ph(\delta_{\tau_k}w)e_{X_1})}{\ph(\delta_{\tau_k}w)}\frac{\ph(\delta_{\tau_k}w)}{\tau_k}\Big) \\
&=& \Big(-X_1f(x)\Big)\,\lim_{k\to\infty}\frac{\ph(\delta_{\tau_k}w)}{\tau_k}\,,
\end{eqnarray*}
where the last equality follows by Lagrange theorem and the continuity of $X_1f$. 
\end{comment}
This proves \eqref{eq:ph0}, hence \eqref{eq:A_k} follows.
We consider the following integral as the sum
\[
 \int_{A_k}
\frac{\sqrt{\sum_{j=1}^mX_jf(\Phi(\Lambda_{t_k}\eta))^2}}{X_1f(\Phi(\Lambda_{t_k}\eta))}\;d\cH^{n-1}(\eta)= I_k+J_k\,,
\]
where, introducing the density function 
\[ \alpha(t,\eta)=\big(X_1f(\Phi(\Lambda_t\eta))\big)^{-1}\sqrt{\sum_{j=1}^mX_jf(\Phi(\Lambda_t\eta))^2},\]
we have set 
\[
 I_k=\int_{A_k\cap S_z} \alpha(t_k,\eta)\,d\cH^{n-1}(\eta)\quad
\mbox{and}\quad 
 J_k=\int_{A_k\sm S_z}\alpha(t_k,\eta)\,d\cH^{n-1}(\eta)\,.
\]
In principle, when $w\in N\cap\der\B(z,1)$ we do not have information on the limit of ${\bf 1}_{A_k\cap S_z}(w)$ as $k\to\infty$. Taking into account \eqref{eq:Lambdat}, this depend on the geometry of $x^{-1}\Sigma\cap\B(0,1)$.
However, in this step we wish to prove only one inequality. Then we consider the following inequality
\begin{equation}\label{eq:I_kle}
I_k\le \int_{S_z}\alpha(t_k,\eta)\,d\cH^{n-1}(\eta)\,.
\end{equation}
Due to \eqref{eq:key_inclusion}, we have 
\[
J_k\le\int_{K_0\sm S_z}{\bf 1}_{A_k}(\eta)\,\alpha(t_k,\eta)\,d\cH^{n-1}(\eta)\,,
\]
the integrand goes to zero as $k\to\infty$ due to \eqref{eq:A_k} and it is uniformly bounded, then we can apply Lebesgue's convergence theorem, proving that $J_k\to0$. Again Lebesgue's theorem gives
\[
\lim_{k\to\infty}\int_{S_z}\alpha(t_k,\eta)\,d\cH^{n-1}(\eta)=\cH^{n-1}(S_z)\,.
\]
In fact, $X_jf(0)=0$ for $j=2,\ldots,m$, hence $\alpha(t_k,\eta)\to1$ as $k\to\infty$.
This gives 
\[
\theta^{Q-1}(\sigma_\Sigma,x)=\lim_{k\to\infty}\int_{A_k}\frac{\sqrt{\sum_{j=1}^mX_jf(\Phi(\Lambda_{t_k}\eta))^2}}{X_1f(\Phi(\Lambda_{t_k}\eta))}\;d\cH^{n-1}(\eta)\le \cH^{n-1}(S_z)\le\cH^{n-1}(S_{z_0})\,,
\] 
where $z_0\in\B(0,1)$ is such that 
\begin{equation}\label{eq:z_0vertreg}
\cH^{n-1}(S_{z_0})=\beta(d,\nu_\Sigma(x))\,.
\end{equation}
To prove the opposite inequality, we select $y^0_t=x\delta_tz_0\in\B(x,t)$ and fix $\lambda>1$. We observe that
\[
\sup_{0<t<r}\frac{ \sigma_\Sigma\ls\B(y^0_t,\lambda t)\rs}{(\lambda t)^{Q-1}}\le\sup_{\substack{y\in\B(x,t)\\ 0<t<\lambda r}}\frac{\sigma_\Sigma\ls\B(y,t)\rs}{t^{Q-1}}
\]
for every $r>0$, therefore the definition of spherical Federer density \eqref{eq:sphF} yields
\[
\limsup_{t\to0^+}\frac{ \sigma_\Sigma\ls\B(y^0_t,\lambda t)\rs}{(\lambda t)^{Q-1}}\le\theta^{Q-1}(\sigma_\Sigma,x)\,.
\]
Taking into account \eqref{eq:Lambdat}, we set
\begin{equation}
A^0_t=\Lambda_{1/\lambda t}\ls\Phi^{-1}(\B(y^0_t,\lambda t))\rs=
\Big\{\eta\in \Lambda_{1/\lambda t}V\mid \eta\,\bigg(\frac{\ph\ls\delta_{\lambda t}\eta\rs}{\lambda t}e_{X_1}\bigg) \in\B\ls  \delta_{1/\lambda}z_0,1\rs\Big\}\,,
\end{equation}
that implies
\[
 \frac{ \sigma_\Sigma\ls\B(y^0_t,\lambda t)\rs}{(\lambda t)^{Q-1}}=\int_{A^0_t}\alpha(\lambda t,\eta)\,d\cH^{n-1}(\eta)=
\frac{1}{\lambda^{Q-1}}\int_{\delta_\lambda A^0_t}\alpha(\lambda t,\delta_{1/\lambda}\eta)\,d\cH^{n-1}(\eta)\,.
\]
We have
\begin{equation}
\delta_\lambda A^0_t=\Big\{\eta\in \Lambda_{1/t}V\mid
 \eta\,\Big(\frac{\ph\ls\delta_t\eta\rs}{t}e_{X_1}\Big) \in\B(z_0,\lambda) \Big\}
\end{equation}
and observe that $\alpha(t,\eta)$ is well defined and bounded on $[0,\bar\ep]\times\ls N\cap B(z_0,\lambda )\rs$, for $\bar\ep$ sufficiently small.
Taking into account that 
\[
\lim_{t\to0^+} {\bf 1}_{\delta_\lambda A^0_t}(w)=1
\]
for all $w\in N\cap B(z_0,\lambda)$, along with the inequality
\[
\lambda^{1-Q}\int_{N\cap B(z_0,\lambda )}{\bf 1}_{\delta_\lambda A^0_t}(w)\,\alpha(\lambda t,\delta_{1/\lambda}\eta)\,d\cH^{n-1}(\eta)\le
\frac{ \sigma_\Sigma\ls\B(y^0_t,\lambda t)\rs}{(\lambda t)^{Q-1}}\,,
\]
it follows that
\[
\lambda^{1-Q}\cH^{n-1}\ls N\cap B(z_0,\lambda )\rs\le\theta^{Q-1}(\sigma_\Sigma,x).
\]
%
%
\begin{comment}
Since $z_0^{-1}N$ is a vertical hyperplane and $\tau_0:N\to z_0^{-1}N$, with $\tau_0(w)=z_0^{-1}w$ is an isometry, we have that $\cH^{Q-1}\ls z_0^{-1}N\cap\der\B(0,1)\rs=0$ is equivalent to $\cH^{Q-1}\ls N\cap\der\B(z_0,1)\rs=0$, that is equivalent to $\cH^{n-1}\ls N\cap\der\B(z_0,1)\rs=0$.
\end{comment}
As $\lambda\to1^+$ the opposite inequality follows, hence concluding the proof.
\end{proof}
\section{Area formulae for the perimeter measure}\label{Sect:AreaPerim}
This section is devoted to establish the general relationship between perimeter measure and spherical Hausdorff measure.
Joining Theorem~\ref{the:persig} with Theorem~\ref{the:metricspherical}, we obtain the following result.
%
%
%
%
%
\begin{The}[Area formula] \label{the:areaSigma}
Let $\Sigma$ be a parametrized $\G$-regular hypersurface. If $\sigma_\Sigma$ is its associated perimeter measure \eqref{eq:sigmaSigma}, then 
\[
 \sigma_\Sigma=\beta(d,\nu_\Sigma)\,\cS_0^{Q-1}\res\Sigma\,.
\]
\end{The}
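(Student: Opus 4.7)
The strategy is to combine Theorem~\ref{the:persig} with Theorem~\ref{the:metricspherical}, as signaled in the excerpt: the former identifies the spherical Federer density of $\sigma_\Sigma$ at every $x\in\Sigma$ as $\beta(d,\nu_\Sigma(x))$, and the latter turns such a pointwise density identity into a measure-theoretic integral representation. With $\alpha=Q-1$ and geometric constant $c_{Q-1}=2^{-(Q-1)}$, the associated spherical Hausdorff measure is exactly $\cS_0^{Q-1}$, so no normalization mismatch occurs.

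First I would verify the hypotheses of Theorem~\ref{the:metricspherical} applied with $X=\G$, $\mu=\sigma_\Sigma$ and $A=\Sigma$. A stratified group endowed with a homogeneous distance is doubling, complete and separable, hence diametrically regular. The perimeter measure $\sigma_\Sigma$ is Borel regular and locally finite, which supplies the required countable open covering of $\G$ by sets of finite $\sigma_\Sigma$-measure and makes the family of closed balls a fine cover of $\Sigma$. The remaining two conditions---local finiteness of $\cS_0^{Q-1}(\Sigma)$ and absolute continuity $\sigma_\Sigma\res\Sigma\ll\cS_0^{Q-1}\res\Sigma$---both follow from the boundedness of $\theta^{Q-1}(\sigma_\Sigma,\cdot)$ on $\Sigma$ established in Theorem~\ref{the:persig}, together with the intrinsic graph representation furnished by Theorem~\ref{the:implicit}, which realises $\Sigma$ locally as the image of an open piece of a vertical subgroup of Hausdorff dimension $Q-1$.

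Once the hypotheses are in place, the integral representation \eqref{eq:spharea} applied to any Borel set $B\subset\G$ yields
\[
\sigma_\Sigma(B)=\sigma_\Sigma(B\cap\Sigma)=\int_{B\cap\Sigma}\theta^{Q-1}(\sigma_\Sigma,x)\,d\cS_0^{Q-1}(x)=\int_B\beta\ls d,\nu_\Sigma(x)\rs\,d\ls\cS_0^{Q-1}\res\Sigma\rs(x),
\]
where the first equality uses that $\sigma_\Sigma$ is concentrated on $\Sigma$ and the last substitutes the value of the Federer density from Theorem~\ref{the:persig}. This is exactly the claimed identity $\sigma_\Sigma=\beta(d,\nu_\Sigma)\,\cS_0^{Q-1}\res\Sigma$.

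The main point is not any individual difficult step but rather the bookkeeping compatibility between the local blow-up identity and the global area formula: one has to match the choice of constant $c_{Q-1}$ in \eqref{eq:FDens} with that in Definition~\ref{def:sphericalH}, and confirm that $\sigma_\Sigma$ is indeed supported on $\Sigma$ so that restricting to $B\cap\Sigma$ loses nothing. Beyond this, no new geometric input is required; the theorem is in essence the measure-theoretic packaging of the upper blow-up.
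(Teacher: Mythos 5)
Your proposal takes essentially the same route as the paper, which gives no detailed proof of Theorem~\ref{the:areaSigma} beyond the remark that it follows by joining Theorem~\ref{the:persig} with Theorem~\ref{the:metricspherical}; your hypothesis-checking is the content the paper leaves implicit. One minor imprecision: deducing local finiteness of $\cS_0^{Q-1}\res\Sigma$ needs the Federer density to be bounded \emph{below} away from zero (not merely bounded), while absolute continuity uses the bound from above --- both of which are indeed supplied by the identity $\theta^{Q-1}(\sigma_\Sigma,\cdot)=\beta(d,\nu_\Sigma)$ since $\beta$ is positive and finite, and the localization to pieces of $\Sigma$ with finite measure should be spelled out before invoking \eqref{eq:spharea}.
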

This theorem also yields an area formula for the perimeter measure. To present this result we introduce a few more definitions. A subset $S\subset \G$ is {\em $\G$-rectifiable} if there exists a countable family $\{\Sigma_j\mid j\in\N\}$ of parametrized $\G$-regular hypersurfaces $\Sigma_j$ such that \[\cS_0^{Q-1}\ls S\sm\bcup\Sigma_j\rs=0.\] A measurable set $E\subset\G$ has $h$-finite perimeter if 
\begin{equation}\label{eq:PerE}
\sup\left\{\int_E\div X\,d\mu\, \Big|\, X\in C_c^1(\G,H\G),\, |X|\le1  \right\}<+\infty.
\end{equation}
This allows for defining a finite Radon measure $\Per$ on $\G$, see for instance \cite{AmbMag28Ghe}. 
The {\em reduced boundary} $\Rd$ is the set of points $x\in\G$ such that there exists $\nu_E(x)\in H_0\G$ with $|\nu_E(x)|=1$ and
\[
 \lim_{r\to0^+}\frac{1}{\Per\ls\B(x,r)\rs}\int_{\B(x,r)}\nu_E(y)\,d\Per(y)=\nu_E(x)\,,
\]
where $\nu_E$ is the generalized inward normal of $E$, see \cite{FSSC5} for more information.
%
%
%
%
%
%
\begin{proof}[Proof of Theorem~\ref{the:AreaPerim}]
Since the perimeter measure is asymptotically doubling, the perimeter measure $\Per$ is concentrated on the reduced boundary $\Rd$, see \cite{Amb01}. Moreover, the $\G$-rectifiability of $\Rd$ implies the existence of a countable family $\{\Sigma_j\mid j\in\N\}$ of parametrized $\G$-regular hypersurfaces $\Sigma_j$ such that 
\[\cS_0^{Q-1}\Big(\Rd\sm\bcup\Sigma_j\Big)=0.\]
On each $\Sigma_j$ we define the Radon measure
\[
\mu_j=\beta( d,\nu_{\Sigma_j})\, \cS_0^{Q-1}\res\Sigma_j\,,
\]
hence the area formula of Theorem~\ref{the:areaSigma} gives $\mu_j=\sigma_{\Sigma_j}$. The argument of the upper blow-up theorem clearly simplifies in the case of the following centered blow-up, giving
\begin{equation}\label{eq:Sigmaj}
 \lim_{r\to0^+}\frac{\sigma_{\Sigma_j}\ls \B(y,r)\rs}{r^{Q-1}}=\cH^{n-1}\Ls N\ls\nu_{\Sigma_j}(y)\rs\cap\B(0,1)\Rs
\end{equation}
for all $y\in\Sigma_j$. From Theorem~1.2 of \cite{AKLD09}, there exists $\cR_E\subset\Rd$ such that 
\[ 
 \Per\big(\Rd\sm\cR_E\big)=0
\]
and for every $x\in\cR_E$ the following vertical halfspace
\[
 Z\ls\nu_E(x)\rs=\lb v\in\G\mid v=v_1+v_0,\, v_1\in V_1,\,v_0\in V_2\oplus\cdots\oplus V_\iota,\, \lan v_1,\nu_E(x)\ran<0\rb
\]
belongs to $\Tan(E,x)$, see Definition~6.3 of \cite{AKLD09}.
Since $\Per$ is asymptotically doubling, \cite{Amb01}, the perimeter measure $\Per$ can differentiate $\mu_j$, 
hence we can define
\begin{equation}\label{eq:lim1}
\sigma_E(x)=\lim_{r\to0^+}\frac{\mu_j\ls\B(x,r)\rs}{\Per\ls \B(x,r)\rs}
\end{equation}
for each $x\in\cR_E^1\subset\cR_E\cap \Sigma_j$, where $\Per\big((\cR_E\cap\Sigma_j)\sm\cR_E^1\big)=0$.
If $x\in\cR_E^1\subset\cR_E\cap \Sigma_j$, then $Z\ls\nu_E(x)\rs\in\Tan(E,x)$ and there exists
an infinitesimal positive sequence $(r_k)$ of radii such that
\begin{equation}\label{eq:limZx}
 |\der_HE_{x,r_k}|\ls \B(0,1)\rs\lra|\der_HZ\ls\nu_E(x)\rs|\ls \B(0,1)\rs
\quad\mbox{as}\quad k\to\infty\,.
\end{equation}
%
%
\begin{comment}
Notice that from \cite{FSSC5} the previous limit is stated for the open ball, but we obviously have $|\der_HE_{x,r}|\ls \der B(0,t)\rs=0$ for all $t>0$ using homogeneity and the fact that $|\der_HE_{x,r}|$ is a finite measure.
\end{comment}
%
%
%
The boundary of $Z\ls\nu_E(x)\rs$ is $N\ls\nu_E(x)\rs$, that is a parametrized $\G$-regular hypersurface and there holds
\begin{equation}\label{eq:Zx}
|Z\ls\nu_E(x)\rs|\ls \B(0,1)\rs=\cH^{n-1}\Ls N\ls\nu_E(x)\rs\cap \B(0,1)\Rs\,.
\end{equation}
%
%
\begin{comment}
This formula can be easily obtained by considering the function $f_0:\G\to\R$ defined by
\[ 
f_0(y)=\lan\pi_1(y),\nu_E(x)\ran
\]
and selecting an orthonormal basis of $\cV_1$ such that $X_1(0)=\nu_E(x)$. From the formula of \cite{FSSC4} for the representation of the perimeter measure 
%
\begin{eqnarray*}
|\der E|_H\ls \B(0,1)\rs= \int_{\Phi_0^{-1}(\B(0,1))}
\frac{\sqrt{\sum_{j=1}^mX_jf_0(\Phi_0(n))^2}}{X_1f_0(\Phi_0(n))}\;d\cH^{n-1}(n)=\cH^{n-1}\ls N\cap\B(0,1)\rs\,,
\end{eqnarray*}
since $\Phi_0(n)=n$. This formula could be obviously obtained also directly from the definition of perimeter measure. 
\end{comment}
%
%
%
Joining \eqref{eq:Sigmaj}, \eqref{eq:limZx} and \eqref{eq:Zx}, we obtain that $\sigma_E(x)=1$, 
hence \eqref{eq:lim1} immediately leads us to the conclusion.
%
%
\begin{comment}
Let $B$ be a $\Per$ measurable set of $\G$. Consider $B_1=\Rd\cap B$ and $B_2=B\sm\Rd$. By \cite{Amb01}, the condition $\Per(B_2)=0$ implies $\cS^{Q-1}_0(B_2)=0$. Since $\Rd$ is $\G$-rectifiable, we have 
\[
 B_1=N\cup \bcup B_1\cap \Sigma_j=N\cup \bcup_{j=1}^\infty \bigg(\ls B_1\cap \Sigma_j\rs\sm\bigcup_{l=1}^{j-1}\Sigma_l\bigg)
\]
with $\cS^{Q-1}_0(N)=0$ and $N$ is disjoint to the union of elements $B_1\cap\Sigma_j$, hence
\[
 \Per(B)=\Per(B_1)=\sum_{j=1}^\infty\Per\bigg(\ls B_1\cap \Sigma_j\rs\sm\bigcup_{l=1}^{j-1}\Sigma_l\bigg)
\]
and we also have
\[
 \Per\bigg(\ls B_1\cap \Sigma_j\rs\sm\bigcup_{l=1}^{j-1}\Sigma_l\bigg)=
\mu_j\bigg(\ls B_1\cap \Sigma_j\rs\sm\bigcup_{l=1}^{j-1}\Sigma_l\bigg)\,,
\]
in that \eqref{eq:lim1} holds for all $x\in \ls B_1\cap \Sigma_j\rs\sm\bigcup_{l=1}^{j-1}\Sigma_l$.
Moreover, for each $x\in\Rd\cap\Sigma_j$ we have 
\[
 \nu_{\Sigma_j}(x)=\nu_E(x)\,,
\]
then
\[
 \mu_j\bigg(\ls B_1\cap \Sigma_j\rs\sm\bigcup_{l=1}^{j-1}\Sigma_l\bigg)=\int_{(B_1\cap \Sigma_j)\sm\bigcup_{l=1}^{j-1}\Sigma_l}\beta\ls d,\nu_E(x)\rs\, d\cS_0^{Q-1}(x)\,.
\]
We have proved that
\begin{eqnarray*}
 \Per(B)&=&\sum_{j=1}^\infty\int_{(B_1\cap \Sigma_j)\sm\bigcup_{l=1}^{j-1}\Sigma_l}\beta\ls d,\nu_E(x)\rs\, d\cS_0^{Q-1}(x) \\
&=&\int_{\bcup_{j=1}^\infty\Sigma_j\cap B_1}\beta\ls d,\nu_E(x)\rs\, d\cS_0^{Q-1}(x) \\
&=&\int_{B_1}\beta\ls d,\nu_E(x)\rs\, d\cS_0^{Q-1}(x) \\
&=&\int_B\beta\ls d,\nu_E(x)\rs\, d\cS_0^{Q-1}(x).
\end{eqnarray*}
This clearly proves our claim.
\end{comment}
\end{proof}

%
%
%
%
%
%
%
%
\section{Vertical sections of convex homogeneous balls}\label{Sect:homball}
In this section we study those homogeneous distances with convex unit ball.
The following classical result of convex geometry will play a key role.
\begin{The}[\cite{Busem49}]\label{the:convexSection}
Let $H$ be an $n$-dimensional Hilbert space with $n\ge2$ and let $C$ be a compact convex set with nonempty interior that contains the origin.
Let $v\in H\sm\{0\}$ and let $N$ denote the orthogonal space to $v$. Then the function 
\[
\psi(t)=\big[\cH^{n-1}\ls C\cap (tv+N)\rs\big]^{1/(n-1)}
\]
is concave on the interval $\{t\in\R: C\cap (tv+N)\neq\emptyset\}$.
\end{The}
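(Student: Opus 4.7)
The plan is to reduce the claim to the Brunn--Minkowski inequality in $(n-1)$-dimensional Euclidean space. First I would identify $N$ with $\R^{n-1}$ via the fixed inner product and, for each $t$ in the slicing interval $I=\{t\in\R:C\cap(tv+N)\neq\emptyset\}$, introduce the translated slice
\[
\tilde C_t=\{x\in N:tv+x\in C\}\subset N.
\]
Since $C$ is compact and convex, each $\tilde C_t$ is compact and convex, $I$ is a bounded closed interval, and $\cH^{n-1}(\tilde C_t)=\cH^{n-1}\ls C\cap(tv+N)\rs$, so $\psi(t)=[\cH^{n-1}(\tilde C_t)]^{1/(n-1)}$.

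Next I would exploit convexity of $C$ to compare the slices at different heights. Fix $t_0,t_1\in I$ and $\lambda\in(0,1)$, and set $t_\lambda=(1-\lambda)t_0+\lambda t_1$. For any $x_0\in\tilde C_{t_0}$ and $x_1\in\tilde C_{t_1}$, convexity of $C$ gives
\[
(1-\lambda)(t_0v+x_0)+\lambda(t_1v+x_1)=t_\lambda v+\bigl((1-\lambda)x_0+\lambda x_1\bigr)\in C,
\]
so $(1-\lambda)x_0+\lambda x_1\in\tilde C_{t_\lambda}$. Hence the Minkowski-sum inclusion
\[
(1-\lambda)\tilde C_{t_0}+\lambda\tilde C_{t_1}\subset\tilde C_{t_\lambda}
\]
holds in $N$.

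Finally, I would apply the classical Brunn--Minkowski inequality in $N\cong\R^{n-1}$: for nonempty compact sets $A,B\subset N$ and $\lambda\in[0,1]$,
\[
\bigl[\cH^{n-1}\ls(1-\lambda)A+\lambda B\rs\bigr]^{1/(n-1)}\ge(1-\lambda)\bigl[\cH^{n-1}(A)\bigr]^{1/(n-1)}+\lambda\bigl[\cH^{n-1}(B)\bigr]^{1/(n-1)}.
\]
Combined with monotonicity of $\cH^{n-1}$ applied to the inclusion above, this yields $\psi(t_\lambda)\ge(1-\lambda)\psi(t_0)+\lambda\psi(t_1)$, which is the stated concavity on $I$.

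The argument has no real obstacle beyond invoking Brunn--Minkowski; the one point that deserves care is handling the endpoints of $I$, where $\tilde C_t$ may degenerate to a lower-dimensional set and $\psi$ vanishes, but in that case the inequality $\psi(t_\lambda)\ge(1-\lambda)\psi(t_0)+\lambda\psi(t_1)$ is only strengthened, so concavity still holds on the full closed interval.
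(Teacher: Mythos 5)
Your proof is correct and follows essentially the same route as the paper's: both translate the slices into $N\cong\R^{n-1}$, derive the Minkowski-sum inclusion $(1-\lambda)\tilde C_{t_0}+\lambda\tilde C_{t_1}\subset\tilde C_{t_\lambda}$ from convexity of $C$, and conclude by the Brunn--Minkowski inequality in dimension $n-1$. Your remark about the degenerate endpoint slices is a sensible extra precaution and does not change the argument.
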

Thus, we are in the position to establish the result of this section.
\begin{The}\label{the:homConv}
If $d$ is a homogeneous distance such that the corresponding unit ball $\B(0,1)$ 
is convex and $v\in V_1\sm\{0\}$, then we have
\begin{equation}
 \beta(d,v)=\cH^{n-1}\ls N(v)\cap\B(0,1)\rs\,.
\end{equation}
\end{The}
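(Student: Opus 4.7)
The plan is to show that the maximum in the definition of $\beta(d,v)$ is attained at $z=0$, i.e.\ that
\[
\cH^{n-1}\big(\B(z,1)\cap N(v)\big)\le \cH^{n-1}\big(\B(0,1)\cap N(v)\big)
\]
for every $z\in\B(0,1)$. The first observation is that $\B(z,1)=z\cdot\B(0,1)$ and that, in a stratified group, the $V_1$-component of a product is the sum of the $V_1$-components of its factors. Consequently the map $\pi_v(x)=\langle\pi_1(x),v\rangle/|v|^2$ is a continuous group homomorphism $\G\to\R$ whose kernel is $N(v)$, and $z\cdot x\in N(v)$ iff $\pi_v(x)=-s$, where $s:=\pi_v(z)$. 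Denoting by $L_z$ left translation by $z$, this gives
\[
\B(z,1)\cap N(v)=L_z\Big(\B(0,1)\cap\big(-sv+N(v)\big)\Big).
\]

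The key technical step is to show that the restriction of $L_z$ to the affine hyperplane $-sv+N(v)$, viewed as a bijection onto $N(v)$, preserves the Euclidean $(n-1)$-dimensional Hausdorff measure. I would fix a graded orthonormal basis $(e_1,\ldots,e_n)$ with $e_1=v/|v|$. By the Baker--Campbell--Hausdorff formula, $(z\cdot x)_j=z_j+x_j+R_j(z,x)$, where $R_j$ depends polynomially only on coordinates of $z$ and $x$ lying in strata of degree strictly less than the degree of the stratum containing $e_j$; in particular $R_j\equiv 0$ for $e_j\in V_1$. Parameterizing $-sv+N(v)$ by the coordinates $(x_2,\ldots,x_n)$ of $N(v)=\{y_1=0\}$, the Jacobian matrix $\partial y/\partial x$ of the map $x\mapsto L_z(x)$ (restricted to $\{x_1=-s\}$ and projected to $\{y_1=0\}$) becomes block lower triangular with respect to the stratum ordering, with identity blocks on the diagonal; hence its determinant is $1$. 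Since the Euclidean $\cH^{n-1}$ on any hyperplane orthogonal to $v$ coincides with Lebesgue measure in these coordinates, the change of variables formula yields
\[
\cH^{n-1}\big(\B(z,1)\cap N(v)\big)=\cH^{n-1}\big(\B(0,1)\cap(-sv+N(v))\big).
\]

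To finish, I would apply Theorem~\ref{the:convexSection} to the convex body $C=\B(0,1)$: the function
\[
\psi(t)=\big[\cH^{n-1}\big(\B(0,1)\cap(tv+N(v))\big)\big]^{1/(n-1)}
\]
is concave on the interval where it is positive. Moreover $\B(0,1)$ is centrally symmetric, since in exponential coordinates on a nilpotent Lie group the inverse reads $x^{-1}=-x$, while left invariance and the symmetry of $d$ give $\|x\|=\|x^{-1}\|=\|-x\|$. Therefore $\psi$ is an even function, and any even concave function satisfies $\psi(0)\ge\tfrac12(\psi(t)+\psi(-t))=\psi(t)$. Combining the three preceding displays we obtain
\[
\cH^{n-1}\big(\B(z,1)\cap N(v)\big)=\psi(-s)^{n-1}\le\psi(0)^{n-1}=\cH^{n-1}\big(\B(0,1)\cap N(v)\big),
\]
with equality at $z=0$, which proves the theorem. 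The main obstacle is the Jacobian computation in the second paragraph, which depends crucially on the triangular structure of the BCH product in graded coordinates; once this measure preservation is established, the rest reduces to a short application of Busemann's theorem together with central symmetry.
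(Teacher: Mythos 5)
Your proof is correct and follows essentially the same strategy as the paper's: both use that left translation between vertical cosets preserves $\cH^{n-1}$ to reduce the slice $\B(z,1)\cap N(v)$ to a parallel slice $\B(0,1)\cap(tv+N(v))$, then combine Busemann's concavity (Theorem~\ref{the:convexSection}) with the central symmetry $\B(0,1)=-\B(0,1)$ to conclude that the even concave function $\psi$ attains its maximum at $t=0$. Your explicit BCH block-triangularity computation of the Jacobian spells out a step the paper asserts without detail (namely that the Euclidean Jacobian of $\tau_z\colon N\to zN$ equals $1$).
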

\begin{proof}
Let us set $N=N(v)$, where $N(v)$ is the vertical subgroup, see Definition~\ref{def:Nnull}. The Euclidean Jacobian of the translation $\tau_z:N\to zN$ is one for all $z\in\G$, hence
\[
\cH^{n-1}\ls N\cap \B(z,1)\rs =\cH^{n-1}\ls \B(0,1)\cap z^{-1}N\rs\,.
\]
To study the previous function with respect to $z$, we introduce
\[
 a(z)=\cH^{n-1}\ls \B(0,1)\cap zN\rs\,.
\]
Defining $H=\R v$, we have two canonical projections 
$\pi_1:\G\to H$ and $\pi_2:\G\to N$ such that $y=\pi_1(y)\pi_2(y)$ for all $y\in\G$, see Proposition~7.6 of \cite{Mag14}.
Since $H$ is a horizontal subspace, one can also check that $\pi_1:\G\to H$ is precisely the linear projection onto $H$ with respect to the direct sum of linear spaces $H\oplus N=\G$. As a consequence,
\begin{equation}\label{eq:az}
 a(z)=\cH^{n-1}\ls \B(0,1)\cap zN\rs=\cH^{n-1}\ls \B(0,1)\cap\pi_1(z)N\rs\,.
\end{equation}
Furthermore, $\pi_1(z)N=\pi_1(z)+N$ and $\B(0,1)^{-1}=-\B(0,1)$, therefore
\[
  a(z)=\cH^{n-1}\ls \B(0,1)\cap \ls\pi_1(z)+N\rs\rs=\cH^{n-1}\ls \B(0,1)\cap\ls-\pi_1(z)+N\rs\rs
\]
and the property $-\pi_1(z)=\pi_1(z^{-1})$ yields
\[
  a(z)=\cH^{n-1}\ls \B(0,1)\cap \ls\pi_1(z^{-1})+N\rs\rs=\cH^{n-1}\ls \B(0,1)\cap\ls \pi_1(z^{-1})N\rs\rs.
\]
Thus, by \eqref{eq:az} we get 
$a(z)=\cH^{n-1}\ls \B(0,1)\cap\ls z^{-1}N\rs\rs=a(z^{-1})=a(-z)$, hence $a$ is an even function.
For every $t\in\R$ we may define the function 
\[
b(t)=\Big[\cH^{n-1}\Big( \B(0,1)\cap\ls tv+N\rs\Big)\Big]^{1/(n-1)}
\]
By Theorem~\ref{the:convexSection}, the function $b(t)=\sqrt[n-1]{a(tv)}$ is concave and even on the compact interval 
\[
 I=\{t\in\R: \B(0,1)\cap (tv+N)\neq\emptyset\}\,,
\]
hence we get
\[
 \beta(d,v)=\max_{z\in\B(0,1)}\cH^{n-1}\ls(\B(z,1)\cap N(\nu)\rs=\cH^{n-1}\ls N(v)\cap\B(0,1)\rs\,.
\]
Being $N(v)\cap\der \B(0,1)$ locally parametrized by Lipschitz mappings on an $(n-2)$ dimensional open set,
then we obviously have $\cH^{n-1}\ls N(v)\cap\der \B(0,1)\rs=0$, concluding the proof.
%
%
%
\begin{comment}
Notice that we have used the fact that
\[
 N(v)\cap\der \B(0,1)=\der\ls N(v)\cap\B(0,1)\rs
\]
and this is true, since the interior of $N(v)\cap\B(0,1)$ is nonempty.
The subset $\der\ls N(v)\cap\B(0,1)\rs$ is the boundary of a convex set in $N$ having nonempty interior,
hence it is locally parametrized by a Lipschitz mapping.
\end{comment}
\end{proof}
In any general homogeneous group we can always find a homogeneous distance with convex unit ball,
\cite{HebSik90}. The next examples provide other distances with this property.
\begin{Exa}\label{exa:Koranyi}\rm
Let $N=V_1\oplus V_2$ be an H-type group. We have an explicit formula for a homogeneous distance $d(x,y)=\|x^{-1}y\|$ such that 
\[  \|x\|=\sqrt[4]{|x_1|^4+16|x_2|^2}\, \]
where $x,y\in N$, $x=x_1+x_2$ and $x_i\in V_i$ for $i=1,2$, see \cite{Cygan81}. The unit ball with respect to this distance is clearly a convex set.
\end{Exa}
\begin{Exa}\label{exa:dinfty}\rm
Let $\G=V_1\oplus V_2\oplus\cdots\oplus V_\iota$ be any stratified group. From the Baker-Campbell-Hausdorff formula
it is easy to see the existence of constants $\ep_j>0$, with $j=1,\ldots,\iota$ and $\ep_1=1$, such that setting 
\[  \|x\|=\max\{\ep_j |x_j|^{1/j}\} \]
with $x_i\in V_i$ for all $i=1,\ldots,\iota$, we have actually defined the homogeneous distance
$
d_\infty(x,y)=\|x^{-1}y\|, 
$
as it was observed in \cite{FSSC5}. The unit ball $\B(0,1)$ with respect to $d_\infty$ is clearly a convex set.
\end{Exa}
%
%
\begin{comment} 
In fact, $d_\infty(x,0)=\max\{\ep_i|x_i|^{1/i}\}$ and 
\[ 
\{d_\infty(x,0)\le1\}=\{x:\,\ep_i|x_i|^{1/i}\le1\,\mbox{for all $i$} \}=
\{x:\,\ep_i^i|x_i|\le1\,\mbox{for all $i$} \}
\]
hence we have
\[ 
\{d_\infty(x,0)\le1\}=\{x:\,\max\{\ep_i^i|x_i|\}\le1  \}
\]
and $x\to \max\{\ep_i^i|x_i|\}$ is convex, hence $\{d_\infty(x,0)\le1\}$ is convex.
\end{comment}
%
%
%
%
%
%
%
%
%
%
%
%
%
\section{Vertically symmetric distances}\label{Sect:Hsym}
The next definition introduces those distances whose symmetries allows for having a precise geometric
constant in the definition of the spherical Hausdorff measure $\cS^{Q-1}_\G$, as discussed in the introduction.
Theorem~\ref{the:constBeta} below will prove this fact.
\begin{Def}\label{def:hsym} \rm
Let $\G$ be a stratified group of topological dimension $n$, with direct decomposition $\G=V_1\oplus W$ and
$W=V_2\oplus\cdots\oplus V_\iota$. We equip $\G$ by a scalar product that makes $V_1$ and $W$ orthogonal.
We consider a family $\cF_1\subset O(V_1)$ that acts transitively on $V_1$, where $O(V_1)$ denotes
the group of isometries of $V_1$. We set
\[ \cO_1=\{T\in GL_n(\G): T_W=\Id_W,\; T|_{V_1}\in\cF_1\}. \]
We denote by ${\bf p}_1:\G\to V_1$ the orthogonal projection onto $V_1$ and 
set \[ \B(0,1)=\{y\in\G: d(y,0)\le1\},\]
where $d$ is a homogeneous distance of $\G$. We say that $d$ is {\em $V_1$-vertically symmetric} if 
\begin{enumerate}
\item 
${\bf p}_1\ls\B(0,1)\rs=\B(0,1)\cap V_1=\{h\in V_1: |h|\le r_0\}$ for some $r_0>0$,
\item
$T\ls \B(0,1)\rs=\B(0,1)$ for all $T\in\cF_1$.
\end{enumerate}
\end{Def}
\begin{Rem}\rm
It is not difficult to observe that the distances of Example~\ref{exa:Koranyi} and Example~\ref{exa:dinfty}
are both $V_1$-vertically symmetric. The sub-Riemannian distance of the Heisenberg group is also $V_1$-vertically symmetric.
This can be also checked by the explicit formula for the profile of its sub-Riemannian unit ball.
\end{Rem}
%
%
\begin{comment}
It seems reasonable to expect that the previous inclusion could not be turned into an equality
since $\cF_1$ is need not be a subgroup of $O(V_1)$. 
\end{comment}

%
%
%
\begin{The}\label{the:constBeta}
If a homogeneous distance $d$ is $V_1$-vertically symmetric, then $\beta(d,\cdot)$ is a constant function.
\end{The}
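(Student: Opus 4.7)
The plan is to extract from the symmetry family $\cF_1$ a Euclidean orthogonal transformation of $\G$ that permutes the admissible directions $\nu\in V_1\setminus\{0\}$, preserves $\B(0,1)$, and maps parallel affine hyperplanes transverse to $\nu$ to those transverse to $\nu'$. Since $N(\nu)$ depends only on the line spanned by $\nu$, one has $\beta(d,t\nu)=\beta(d,\nu)$ for $t\neq 0$, so it suffices to establish $\beta(d,\nu)=\beta(d,\nu')$ for two unit vectors $\nu,\nu'\in V_1$.

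The first step is to eliminate the center $z$ from the definition of $\beta(d,\nu)$, rewriting it as a maximum of Euclidean $(n-1)$-areas of parallel slices of $\B(0,1)$. By left invariance $\B(z,1)=z\cdot\B(0,1)$, and in any stratified group $\pi_1(zy)=\pi_1(z)+\pi_1(y)$, so the condition $zy\in N(\nu)$ is equivalent to $\langle y,\nu\rangle=-\langle z,\nu\rangle$. In a graded orthonormal basis, $DL_z(y)$ is block lower-triangular with identity diagonal blocks (by the graded polynomial structure of the group law), hence its restriction to the common tangent $N(\nu)$ of the parallel hyperplanes $\{y:\langle y,\nu\rangle=-\langle z,\nu\rangle\}$ and $N(\nu)=\{y:\langle y,\nu\rangle=0\}$ still has unit Jacobian. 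Consequently $L_z$ restricts to an $\cH^{n-1}$-preserving diffeomorphism between the two slices, giving
\[
\cH^{n-1}\bigl(\B(z,1)\cap N(\nu)\bigr)=\cH^{n-1}\bigl(\B(0,1)\cap\{y:\langle y,\nu\rangle=-\langle z,\nu\rangle\}\bigr).
\]
Property~(1) of Definition~\ref{def:hsym} identifies $\{\langle z,\nu\rangle:z\in\B(0,1)\}$ with $[-r_0,r_0]$, so one obtains the slice representation
\[
\beta(d,\nu)=\max_{t\in[-r_0,r_0]}\cH^{n-1}\bigl(\B(0,1)\cap\{y:\langle y,\nu\rangle=t\}\bigr).
\]

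To finish, by transitivity of $\cF_1$ pick $T\in\cF_1$ with $T\nu=\nu'$ and extend it to $\tilde T\in\cO_1$. Since $\tilde T|_{V_1}$ is orthogonal, $\tilde T|_W=\Id_W$, and $V_1\perp W$, the map $\tilde T$ is an orthogonal transformation of the Euclidean space $\G$, hence preserves $\cH^{n-1}$. Property~(2) gives $\tilde T(\B(0,1))=\B(0,1)$, and the identity $\langle \tilde T y,\tilde T v\rangle=\langle y,v\rangle$ yields $\tilde T(\{y:\langle y,\nu\rangle=t\})=\{y:\langle y,\nu'\rangle=t\}$. Therefore each slice in the representation of $\beta(d,\nu)$ is mapped isometrically onto the corresponding slice in the representation of $\beta(d,\nu')$, and taking the max over $t$ yields $\beta(d,\nu)=\beta(d,\nu')$.

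The main obstacle is the slicing step. The symmetry $\tilde T$ is not in general a Lie group automorphism of $\G$, so one cannot transport a ball $\B(z,1)$ with $z\neq 0$ directly to $\B(\tilde T z,1)$. The resolution is to first reduce $\beta$ to a quantity depending only on the scalar $\langle z,\nu\rangle$, for which the unit Jacobian of left translation restricted to codimension-one slices is essential; this in turn relies on the graded, polynomial structure of the group law together with the projection condition~(1) in Definition~\ref{def:hsym}.
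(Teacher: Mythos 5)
Your proof is correct and follows essentially the same route as the paper's: the paper likewise reduces $\cH^{n-1}\ls\B(z,1)\cap N(\nu)\rs$ to the area of a parallel affine slice of $\B(0,1)$ (via the unit Jacobian of left translation restricted to cosets of $N(\nu)$ and the semidirect-product projections), uses property~(1) of Definition~\ref{def:hsym} to confine the slice parameter, and then applies the extended isometry $T$ together with property~(2). The only difference is organizational: you package everything into a symmetric slice representation $\beta(d,\nu)=\max_{t\in[-r_0,r_0]}\cH^{n-1}\ls\B(0,1)\cap\{y:\lan y,\nu\ran=t\}\rs$ and conclude in one step, whereas the paper proves the inequality $\beta(d,\nu_1)\le\beta(d,\nu_2)$ and then exchanges the roles of $\nu_1$ and $\nu_2$.
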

\begin{proof}
Let $z\in\B(0,1)$ and choose $\nu_1,\nu_2\in V_1$. Since $\cF_1$ is transitive on $V_1$ there exists
$T\in\cF_1$ such that $T(\nu_1)=\nu_2$. We set $H=\R\nu_1$ and see $\G$ as the inner semidirect 
product between $H$ and $N(\nu_1)$. In fact, we have the two canonical projections 
\[ \pi_1:\G\to H\quad \mbox{ and } \quad \pi_2:\G\to N(\nu_1) \]
 such that $y=\pi_1(y)\pi_2(y)$ for all $y\in\G$.
We set $\pi_1(z^{-1})=h$ and $\pi_2(z^{-1})=n$, therefore
\begin{equation*}
\cH^{n-1}\ls\B(z,1)\cap N(\nu_1)\rs=\cH^{n-1}\ls\B(0,1)\cap \ls h+N(\nu_1)\rs\rs\,.
\end{equation*}
By property (2) of Definition~\ref{def:hsym}, it follows that
\begin{equation*}
\cH^{n-1}\ls\B(z,1)\cap N(\nu_1)\rs=\cH^{n-1}\Big(\B(0,1)\cap \Ls Th+T\big(N(\nu_1)\big)\Rs\Big)\,.
\end{equation*}
Since $\pi_1$ is the linear projection onto $H$ with respect to the decomposition $H\oplus N(\nu_1)$
and $H\bot N(\nu_1)$, we can write $z^{-1}$ as the following sum of orthogonal vectors
\[
 w+h'+h\,,
\]
where $w\in N(\nu_1)$ and $h,h'\in V_1$. By property (1) of Definition~\ref{def:hsym}, we get
\[
 {\bf p}_1(z^{-1})=h'+h\in\B(0,1)\cap V_1=\{y\in V_1: |y|\le r_0\}.
\]
Since $h$ and $h'$ are orthogonal, we get $h\in\{y\in V_1: |y|\le r_0\}=\B(0,1)\cap V_1$, hence
\[
 T(h)\in\B(0,1)\cap V_1
\]
Since $T$ is orthogonal, $T\ls N(\nu_1)\rs=N(\nu_2)$ and we obtain
\begin{equation*}
\cH^{n-1}\ls\B(z,1)\cap N(\nu_1)\rs=\cH^{n-1}\Big(\B(0,1)\cap \Ls \big(T(h)\big) N(\nu_2)\Rs\Big)\,.
\end{equation*}
It follows that 
\begin{equation*}
\cH^{n-1}\ls\B(z,1)\cap N(\nu_1)\rs=\cH^{n-1}\big(\B\big(T(h)^{-1},1\big)\cap N(\nu_2)\big)\le\beta(d,\nu_2)\,.
\end{equation*}
The arbitrary choice of $z\in\B(0,1)$ yields $\beta(d,\nu_1)\le\beta(d,\nu_2)$. Exchanging the role of $\nu_1$ for 
that of $\nu_2$, we conclude the proof.
\end{proof}

\vskip3mm
\noindent
{\bf Acknowledgements.} The author thanks Luigi Ambrosio for fruitful conversations.

\end{document}